\documentclass[a4paper]{amsart}
\usepackage{amsmath,amsthm,amssymb,hyperref,mathrsfs}
\usepackage{aliascnt}
\usepackage{lmodern}
\usepackage[T1]{fontenc}

\usepackage[textsize=footnotesize]{todonotes}

\usepackage{xcolor}
\definecolor{dblue}{rgb}{0,0,0.70}
\hypersetup{
	unicode=true,
	colorlinks=true,
	citecolor=dblue,
	linkcolor=dblue,
	anchorcolor=dblue
}

\makeatletter
\expandafter\g@addto@macro\csname th@plain\endcsname{%
		\thm@notefont{\bfseries}
	}%
\expandafter\g@addto@macro\csname th@remark\endcsname{%
		\thm@headfont{\bfseries}
	}%
\makeatother


\newtheorem{theorem}
{Theorem}[section]	
\newtheorem*{theorem*}{Theorem}

\newaliascnt{lemma}{theorem}

\newtheorem{claim}[theorem]{Claim}
\aliascntresetthe{lemma}
\newtheorem*{lemma*}{Lemma}

\newaliascnt{proposition}{theorem}
\newtheorem{proposition}[proposition]{Proposition}
\aliascntresetthe{proposition}

\newaliascnt{corollary}{theorem}
\newtheorem{corollary}[corollary]{Corollary}
\aliascntresetthe{corollary}

\newtheorem{Kthm}{Theorem}

\theoremstyle{remark}

\newaliascnt{remark}{theorem}

\aliascntresetthe{remark}
\newaliascnt{question}{theorem}
\newtheorem{question}[question]{Question}
\aliascntresetthe{question}
\newaliascnt{conjecture}{theorem}

\aliascntresetthe{conjecture}

\newtheorem*{question*}{Question}

\newaliascnt{definition}{theorem}
\newtheorem{definition}[definition]{Definition}
\aliascntresetthe{definition}

\newaliascnt{example}{theorem}

\aliascntresetthe{example}

\renewcommand{\restriction}{\mathbin\upharpoonright}

\newcommand{\axiom}[1]{\mathsf{#1}} 
\newcommand{\ZFC}{\axiom{ZFC}}
\newcommand{\AC}{\axiom{AC}}
\newcommand{\WO}{\mathrm{WO}}
\newcommand{\CH}{\axiom{CH}}

\newcommand{\DC}{\axiom{DC}}
\newcommand{\ZF}{\axiom{ZF}}

\newcommand{\HOD}{\mathrm{HOD}}
\newcommand{\GCH}{\axiom{GCH}}

\newcommand{\HS}{\axiom{HS}}

\DeclareMathOperator{\dom}{dom}
\DeclareMathOperator{\rng}{rng}
\DeclareMathOperator{\supp}{supp}
\DeclareMathOperator{\rank}{rank}
\DeclareMathOperator{\sym}{sym}

\DeclareMathOperator{\fix}{fix}

\DeclareMathOperator{\id}{id}
\DeclareMathOperator{\aut}{Aut}

\DeclareMathOperator{\Add}{Add}

\newcommand{\forces}{\mathrel{\Vdash}}

\newcommand{\power}{\mathcal{P}}

\newcommand{\PP}{\mathbb P}
\newcommand{\QQ}{\mathbb Q}
\newcommand{\RR}{\mathbb R}

\newcommand{\cB}{\mathcal B}

\newcommand{\cJ}{\mathcal J}
\newcommand{\cK}{\mathcal K}

\newcommand{\sF}{\mathscr F}
\newcommand{\sG}{\mathscr G}

\newcommand{\fc}{\mathfrak c}

\newcommand{\tup}[1]{\langle#1\rangle}

\author{Asaf Karagila}
\thanks{The author was supported by the Royal Society grant no.~NF170989.}
\email[Asaf Karagila]{karagila@math.huji.ac.il}
\urladdr{http://karagila.org}
\address{School of Mathematics,
University of East Anglia.
Norwich, NR4~7TJ, UK
}
\date{December 9, 2019}
\subjclass[2010]{Primary 03E25; Secondary 03E35}
\keywords{axiom of choice, symmetric extensions, realizability}

\title[Realizing realizability results]{Realizing realizability results\\ with classical constructions}
\begin{document}
\begin{abstract}J.L.~Krivine developed a new method based on realizability to construct models of set theory where the axiom of choice fails. We attempt to recreate his results in classical settings, i.e.\ symmetric extensions. We also provide a new condition for preserving well-ordered, and other particular type of choice, in the general settings of symmetric extensions.
\end{abstract}
\maketitle
\section{Introduction}
Kleene's realizability was developed to study proofs, especially in constructive and intuitionistic settings, and was later adopted by computer scientists as well. Jean-Louis Krivine developed this framework to accommodate classical logic, and created a framework for proving new independence results in set theory (see \cite{Krivine:RII} and \cite{Krivine:RIII} for details). It is unclear whether or not this construction is a novel way to present forcing-based construction, or if it is truly a new tool, in which case is it somehow equivalent to classical constructions?

In this paper we aim to try to shed some light on this topic by providing two constructions of Krivine by classical means, a forcing-based approach using the technique of symmetric extensions. We also include a discussion on a third model, and why the same approach as the others failed us, what could be done to solve it. After the first version of this paper was written, Krivine announced a result implying that this third model is equivalent to a symmetric extension of a model of $\ZFC$ (see \autoref{subsect:failure}).

While corresponding with Krivine, he informed us that together with Laura Fontanella they proved some weak versions of the axiom of choice, specifically well-ordered choice, in some of the models constructed by Krivine, but also choice from families indexed by some of the ``paradoxical sets'' added to the model. Another recent work,\cite{FontanellaGeoffroy}, by Fontanella and Guillaume Geoffroy is concerned with producing realizability models where $\DC_\kappa$ holds for some uncountable ordinal $\kappa$. 

In \autoref{sect:pres} we prove a structural theorem for symmetric extensions that provides a condition for preserving the axiom of choice from well-ordered families in a symmetric extension, this is done by generalizing the proof of Theorem~8.9 in \cite{Jech:AC1973} where something similar is proved in the context of $\ZF$ with atoms. Our theorem is general enough to accommodate the axiom of choice from the ``paradoxical sets'' mentioned to us by Krivine, and indeed it applies to our constructions. In addition to that, we use other structural theorems for symmetric extensions to show that Krivine's results can be slightly modified to obtain $\DC_\kappa$ for any fixed $\kappa$.

We should perhaps clarify that the specific theorems of Krivine which we discuss are not important, nor are they particularly interesting. What is interesting is the fact that realizability can be used to construct models of $\ZF$. The work here is focused on reproving the same results in hope that we can start building a bridge of understanding between classical methods and realizability methods, and in particular to help and understand if realizability models are in some sense in correspondence with symmetric extensions.
\subsection{Acknowledgements}
The author would like to thank Jean-Louis Krivine for his kind help explaining some points regarding his construction, and to Jonathan Kirby for a helpful discussion about model theoretic properties related to the third model. The author would also like to thank David Schrittesser and Yair Hayut for their helpful suggestions in correcting some of the problems in previous versions of this manuscript. And finally, we want to express our gratitude to the referee whose suggestions helped improve the exposition and readability of this paper.
\section{Preliminaries}
We use $|X|$ to denote the cardinality of $X$, which is the least ordinal equipotent with $X$ if such exists, or the Scott cardinal of $X$, namely \[\{Y\in V_\alpha\mid\exists f\colon Y\to X\text{ a bijection}\}\] where $\alpha$ is the least ordinal for which this set is non-empty. We say a cardinal is an $\aleph$ if it is the cardinal of an infinite ordinal.

We will write $|X|\leq|Y|$ if there is an injection from $X$ into $Y$, and we write $|X|\leq^*|Y|$ if $X$ is empty or there is a surjection from $Y$ onto $X$.\footnote{Equivalently, $|X|\leq^*|Y|$ if there is a subset $Y'\subseteq Y$ and surjection from $Y'$ onto $X$.} This is a reflexive and transitive relation on the cardinals, but it is not necessarily antisymmetric, therefore the meaning of $|X|<^*|Y|$ is $|X|\leq^*|Y|$ and $|Y|\nleq^*|X|$. See the well-written \cite{BanasMoore:1990} for additional information on $<^*$.

For a set $X$ we denote by $\AC_X$ the statement that every family of non-empty sets indexed by $X$ admits a choice function. We will denote by $\AC_\WO$ the statement that for every $\aleph$-number, $\kappa$, $\AC_\kappa$ holds, and $\AC$ will denote $\forall X\,\AC_X$. Dependent Choice for $\kappa$, or $\DC_\kappa$, is the statement that every tree $T$ which is $\kappa$-closed has a maximal element or a chain of type $\kappa$. We write $\DC_{<\kappa}$ to denote $\forall\lambda<\kappa, \DC_\lambda$, and $\DC$ to mean $\DC_{\aleph_0}$. Chapter~8 of \cite{Jech:AC1973} contains numerous theorems and independence results on $\AC_\WO$ and $\DC_\kappa$. For example, $\AC_\WO$ implies $\DC$, but not $\DC_{\omega_1}$.

We follow the standard practices regarding forcing. If $\PP$ is a notion of forcing, then it is a preordered set with a maximum $1$ whose elements are called conditions, and we write $q\leq p$ to denote that $q$ \textit{extends} $p$ or that $q$ is a \textit{stronger condition} than $p$. Two conditions are compatible if they have a common extension, and otherwise they are incompatible.

If $\{\dot x_i\mid i\in I\}$ is a collection of $\PP$-names, we use $\{\dot x_i\mid i\in I\}^\bullet$ to denote the name $\{\tup{1,\dot x_i}\mid i\in I\}$. This notation extends to other forms of ``canonical definitions'' such as ordered pairs or tuples in the obvious way. Note that using this notation, the canonical names for ground model sets can be written as $\check x=\{\check y\mid y\in x\}^\bullet$.

If $\dot x$ and $\dot y$ are $\PP$-names, we say that $\dot y$ \textit{appears in $\dot x$} if there is a condition $p$ such that $\tup{p,\dot y}\in\dot x$. And for a condition $p$ and a $\PP$-name $\dot x$, we write $\dot x\restriction p$ to denote the name $\{\tup{q,\dot y\restriction p}\mid q\leq p, q\forces\dot y\in\dot x,\text{ and }\dot y\text{ appears in }\dot x\}$. It is easy to verify that $p\forces\dot x\restriction p=\dot x$, and if $q$ is incompatible with $p$, then $q\forces\dot x\restriction p=\check\varnothing$.

We write $\Add(\omega,X)$ to denote the partial order whose conditions are finite partial functions $p\colon X\times\omega\to 2$ ordered by reverse inclusion. For a condition $p$ in $\Add(\omega,X)$, we write $\supp(p)$ as the projection of $\dom p$ to $X$.
\subsection{Symmetric extensions}
Let $\PP$ be a notion of forcing, and let $\pi$ be an automorphism of $\PP$. We can extended $\pi$ to act on $\PP$-names. This action is defined recursively, \[\pi\dot x=\{\tup{\pi p,\pi\dot y}\mid\tup{p,\dot y}\in\dot x\}.\]
If $\sG$ is a subgroup of $\aut(\PP)$ we denote by $\sym_\sG(\dot x)$ the group $\{\pi\in\sG\mid\pi\dot x=\dot x\}$.

Let $\sG$ be a group, we say that $\sF$ is a \textit{normal filter of subgroups} over $\sG$ if $\sF$ is a non-empty family of subgroups of $\sG$ which is closed under supergroups and intersection, and for all $H\in\sF$ and $\pi\in\sG$, $\pi H\pi^{-1}\in\sF$.

A \textit{symmetric system} is a triplet $\tup{\PP,\sG,\sF}$ such that $\PP$ is a notion of forcing, $\sG$ is a subgroup of $\aut(\PP)$, and $\sF$ is a normal filter of subgroups over $\sG$.\footnote{We can relax this to require that $\sF$ is a normal filter base.} For the remaining discussion we fix a symmetric system.

We call a $\PP$-name, $\dot x$, \textit{$\sF$-symmetric} if $\sym_\sG(\dot x)\in\sF$. We say that $\dot x$ is hereditarily $\sF$-symmetric if this condition holds hereditarily for every name appearing in $\dot x$. The class of hereditarily $\sF$-symmetric names is denoted by $\HS_\sF$.

\begin{lemma*}[Lemma~14.37 in \cite{Jech:ST2003}]
Let $\pi\in\aut(\PP)$, let $\dot x$ be a $\PP$-name, and let $\varphi(x)$ a formula in the language of set theory.\[p\forces\varphi(\dot x)\iff\pi p\forces\varphi(\pi\dot x).\]
\end{lemma*}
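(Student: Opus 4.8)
The plan is to prove the equivalence by induction on the complexity of $\varphi$, having first settled the atomic formulas by a separate induction on the ranks of the names. The only features of $\pi$ that I will use are that it is an automorphism of $\PP$: it is a bijection of $\PP$ that preserves the order (and hence compatibility, incompatibility, and the property of a set being dense below a given condition), it is a bijection of the class of all $\PP$-names, and, directly from the recursive definition of its action, $\tup{p,\dot y}\in\dot x$ if and only if $\tup{\pi p,\pi\dot y}\in\pi\dot x$.

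First I would handle the atomic cases $\dot x\in\dot y$ and $\dot x=\dot y$ simultaneously, by induction on the ranks of $\dot x$ and $\dot y$. The recursive definition of $p\forces\dot x=\dot y$ and $p\forces\dot x\in\dot y$ refers only to: the ordering $\leq$, the pairs $\tup{r,\dot z}$ appearing in the names, density below a condition, and forcing statements about names of strictly smaller rank. Applying $\pi$ to every condition, witnessing pair, and dense set occurring in a clause of that definition turns the statement ``$p\forces\dot x=\dot y$'' into ``$\pi p\forces\pi\dot x=\pi\dot y$'', because $\pi$ preserves $\leq$ and density and carries $\tup{r,\dot z}\in\dot x$ to $\tup{\pi r,\pi\dot z}\in\pi\dot x$; the inner recursive calls drop in rank and are converted by the induction hypothesis. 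The clause for $\in$ is handled the same way, invoking the equivalence for $=$ at the relevant ranks.

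The induction over logical connectives and quantifiers is then routine. For negation, $p\forces\neg\psi$ holds iff no $q\leq p$ forces $\psi$, and since $q\mapsto\pi q$ is an order-isomorphism between the conditions below $p$ and those below $\pi p$, the induction hypothesis gives the equivalence; conjunction is immediate. For the existential quantifier I use that $p\forces\exists y\,\psi(y,\dot x)$ iff $\{q\leq p\mid\exists\,\dot z\;(q\forces\psi(\dot z,\dot x))\}$ is dense below $p$: here quantification over a witness $\dot z$ may be replaced by quantification over $\pi\dot z$, since $\pi$ is a bijection of the names, and density is again preserved.

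The main obstacle is the atomic base case. One must set up (or quote) the recursion for $=$ and $\in$ so that every quantifier there ranges over objects $\pi$ permutes—conditions and appearing names—and every recursive call genuinely drops in rank, so that $\pi$ can be pushed uniformly through the definition and the induction closes. Everything after that is mechanical. A more transparent alternative is semantic: prove by induction on $\rank\dot x$ the evaluation identity $(\pi\dot x)^{\pi[G]}=\dot x^{G}$, note that $G$ is generic iff $\pi[G]$ is and that $V[\pi[G]]=V[G]$ because $\pi\in V$, and then read the claim off the forcing theorem by setting $H=\pi[G]$ and using $\pi p\in H\iff p\in G$.
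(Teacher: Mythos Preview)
Your proposal is correct and is essentially the standard textbook argument. Note, however, that the paper does not give its own proof of this lemma at all: it is quoted as Lemma~14.37 of Jech's \emph{Set Theory} and used as a black box, so there is no in-paper proof to compare against. Your syntactic induction on formula complexity, grounded in a rank induction for the atomic cases, is exactly the approach taken in Jech; the semantic alternative you sketch at the end (proving $(\pi\dot x)^{\pi[G]}=\dot x^{G}$ and invoking the forcing theorem) is also standard and equally acceptable.
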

\begin{theorem*}[Lemma~15.51 in \cite{Jech:ST2003}]
Suppose that $G\subseteq\PP$ is a $V$-generic filter, and let $M$ denote $\HS_\sF^G=\{\dot x^G\mid\dot x\in\HS_\sF\}$. Then $V\subseteq M\subseteq V[G]$, and $M$ is a transitive class model of $\ZF$ in $V[G]$.
\end{theorem*}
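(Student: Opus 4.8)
The plan is to treat $M=\HS_\sF^G$ as the $G$-evaluation of the class $\HS_\sF$ and verify the inclusions, transitivity, and each axiom of $\ZF$ in turn. The inclusions and transitivity are quick. Every $\pi\in\sG$ fixes $1$ and, by induction on rank, fixes each canonical name, so $\pi\check x=\check x$ and $\sym_\sG(\check x)=\sG$; since $\sF$ is non-empty and closed under supergroups we have $\sG\in\sF$, and a further induction gives $\check x\in\HS_\sF$, hence $V\subseteq M$. The inclusion $M\subseteq V[G]$ is immediate from $\HS_\sF\subseteq V^\PP$, and transitivity holds because any name appearing in a hereditarily symmetric name is itself hereditarily symmetric, so the witnesses to membership in $\dot x^G$ already lie in $M$.

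Two facts will be used throughout. The first is the Symmetry Lemma stated above, which transports forcing statements along automorphisms. The second is that $\HS_\sF$ is closed under the action of $\sG$: if $\sym_\sG(\dot x)=H\in\sF$ and $\pi\in\sG$, then $\sym_\sG(\pi\dot x)=\pi H\pi^{-1}\in\sF$ by normality of $\sF$, and heredity is checked along the way. A consequence I will lean on is that for each ordinal $\alpha$ the name $\{\dot y\in\HS_\sF\mid\rank(\dot y)<\alpha\}^\bullet$ has symmetry group all of $\sG$, because every automorphism preserves the rank of a name; these bounded-rank collection-names are the engine behind Power Set and Replacement.

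With these in hand, Extensionality and Foundation are free from transitivity and well-foundedness, while Empty Set and Infinity follow from $V\subseteq M$. For Pairing and Union I exhibit the usual names $\{\dot x,\dot y\}^\bullet$ and the standard union-name; their symmetry groups contain $\sym_\sG(\dot x)\cap\sym_\sG(\dot y)$ and $\sym_\sG(\dot x)$ respectively, which lie in $\sF$ as it is closed under intersection and supergroups, and heredity is inherited from the components. For Separation and Power Set the common device is to relativize to $M$ through the $\sG$-fixed class name $\dot M=\{\dot y\in\HS_\sF\}^\bullet$. Given $\dot x\in\HS_\sF$, a formula $\varphi$, and parameters named by $\dot a_1,\dots,\dot a_n\in\HS_\sF$, I form the name $\dot s=\{\langle p,\dot y\rangle\mid\dot y\text{ appears in }\dot x,\ p\forces\dot y\in\dot x\wedge\varphi^{\dot M}(\dot y,\dot a_1,\dots,\dot a_n)\}$; the Symmetry Lemma shows $\pi\dot s=\dot s$ for every $\pi\in\sym_\sG(\dot x)\cap\bigcap_i\sym_\sG(\dot a_i)$, an intersection lying in $\sF$, so $\dot s\in\HS_\sF$ and $\dot s^G$ is the required subset. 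Power Set then follows by taking the bounded-rank collection-name for a rank $\alpha$ large enough (by Replacement in $V$) to capture every hereditarily symmetric name for a subset of $\dot x^G$, and carving out the genuine power set with Separation.

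Replacement is the step I expect to be the main obstacle. Given $\dot x\in\HS_\sF$ and a formula defining a class function on $\dot x^G$ inside $M$, the difficulty is to bound uniformly the ranks of hereditarily symmetric names for the images. The key auxiliary fact is a \emph{symmetric maximal principle}: whenever $p\forces\exists w\,(w\in\dot M\wedge\varphi^{\dot M}(\dot y,w))$, there is some $\dot w\in\HS_\sF$ with $p\forces\varphi^{\dot M}(\dot y,\dot w)$, obtained by mixing hereditarily symmetric names along a maximal antichain below $p$ in a way that preserves membership in $\HS_\sF$. Granting this, I collect such a $\dot w$ for each name $\dot y$ appearing in $\dot x$ and each condition $p$, bound their ranks by some $\alpha$ using Replacement in $V$, and use the $\sG$-invariant collection-name of $\HS_\sF\cap V_\alpha$ to produce a set in $M$ large enough to collect all images; Separation then extracts the actual range. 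Verifying that the mixing step stays inside $\HS_\sF$, and that the relativized forcing statements behave correctly, is the technical heart of the argument.
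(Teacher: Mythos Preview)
The paper does not prove this theorem at all; it is quoted from Jech's textbook as background and left unproved. So there is no ``paper's own proof'' to compare against, only the standard textbook argument your outline is tracking.

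Your outline is essentially that standard argument, but there is one genuine gap. For Replacement you invoke a \emph{symmetric maximal principle}: from $p\forces\exists w\in\dot M\,\varphi^{\dot M}(\dot y,w)$ you want a single $\dot w\in\HS_\sF$ with $p\forces\varphi^{\dot M}(\dot y,\dot w)$, obtained by mixing hereditarily symmetric witnesses along a maximal antichain below $p$. In general this mixing need \emph{not} land back in $\HS_\sF$: the mixed name has symmetry group containing only the intersection of the (possibly $|\PP|$-many) groups $\sym_\sG(\dot w_q)$, and $\sF$ is only assumed closed under finite intersections. Indeed, the paper isolates exactly this closure property under the name \emph{mixable} and treats it as an extra hypothesis on a symmetric system, not a theorem. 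So as written your Replacement step would fail for non-mixable systems.

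The fix is that you never needed the maximal principle. It suffices that below any $p$ forcing the existential there is \emph{some} $q\leq p$ and \emph{some} $\dot w\in\HS_\sF$ with $q\forces\varphi^{\dot M}(\dot y,\dot w)$; this is immediate from the definition of $\forces$ and of $\dot M$. Now run Replacement in $V$ over all $\dot y$ appearing in $\dot x$ and all $q\in\PP$ to bound the ranks of such minimal-rank witnesses by some $\alpha$, and take the $\sG$-invariant name $\{\dot w\in\HS_\sF\mid\rank(\dot w)<\alpha\}^\bullet$ you already identified. Its interpretation is a set in $M$ that, by genericity, contains a witness for every $\dot y^G\in\dot x^G$; Separation then carves out the image. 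This is exactly the route Jech takes, and it avoids any appeal to mixing.
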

We call the class $M$ in the theorem above a \textit{symmetric extension of $V$}. These are models where the axiom of choice may fail, and they are one of the main tools for proving independence results related to the axiom of choice.

Arguments about symmetric extensions have their own forcing relation $\forces^\HS$, which is simply described as the relativization of the forcing relation to the class $\HS$. This relation has a forcing theorem, namely $p\forces^\HS\varphi$ if and only if there is a $V$-generic filter $G$ such that $p\in G$, and $\HS^G\models\varphi$.

We say that a condition $p$ is \textit{$\sF$-tenacious} if there is a group $H\in\sF$ such that for all $\pi\in H$, $\pi p=p$. We say that $\PP$ is $\sF$-tenacious if it has a dense set of $\sF$-tenacious conditions. This notion is useful when we want to assume that some fixed conditions are not moved by any of our relevant automorphisms. It turns out that every symmetric system is equivalent to one in which all the conditions are tenacious, see \S12 in \cite{Karagila:2019} for details, which is why we can always assume without loss of generality that our system is tenacious.

As the symmetric systems will always be clear from the context we will omit the subscripts to improve the readability of the text.
\section{Preserving bits of choice}\label{sect:pres}
Sometimes we only wish to show that a certain assumption does not imply the axiom of choice, but we are especially interested in preserving \textit{some} weak choice principles. In \cite{Karagila:2018c} we study some properties which lets us preserve $\DC$, but we are interested in more. Here we will prove that a broad class of $\AC_X$, and in particular $\AC_\WO$, can be preserved assuming certain conditions on the symmetric system. These conditions are somewhat contrived, but hold naturally in the standard cases where these lemmas apply.

Our goal is to generalize an argument that was used to preserve $\AC_\WO$ in the proof of Theorem~8.9 in \cite{Jech:AC1973} where it is shown that if $\kappa$ is an infinite cardinal, then $\DC_{\kappa^+}$ does not follow from $\AC_\WO$ in the context of $\ZF$ with atoms. The idea is to add $\kappa^+$ new sets and take permutations of them with the filter generated by pointwise stabilizers of sets of size $\kappa$. Then, given a family of non-empty sets which is well-ordered, we pick a set $E$ of size $\kappa$ so that when we pick an arbitrary object in a fixed set in the family, we can argue that we can assume without loss of generality that it is fixed by permutations fixing $E$ pointwise. This lets us uniformly choose representatives, and therefore provides a choice function.

We will need a handful of definitions to simplify the statement of the theorem.

\begin{definition}
We say that a symmetric system $\tup{\PP,\sG,\sF}$ is \textit{$\kappa$-mixable} if whenever $\dot x_\alpha\in\HS$ for $\alpha<\gamma<\kappa$, and $\{p_\alpha\mid\alpha<\gamma\}$ is an antichain, then there is $\dot x\in\HS$ such that for all $\alpha$, $p_\alpha\forces\dot x_\alpha=\dot x$. If $\PP$ is $\kappa$-c.c., we simply say that it is \textit{mixable}.
\end{definition}
The immediate examples of mixable symmetric systems, which (by sheer coincidence) are those we use in this paper, are those where the chain condition of $\PP$ is less or equal than the completeness of $\sF$. In that case, we can simply intersect all the groups of the names being mixed. It is proved in \cite{Karagila:2018c} that if $\PP$ is $\kappa$-c.c.\ and $\sF$ is $\kappa$-complete, $\DC_{<\kappa}$ holds as well, indeed this is the proof of Lemma~3.3 in the paper.

\begin{definition}
We say that $\sF$ is an \textit{almost uniform filter} if there is some $H\in\sF$ such that for all $H_0,H_1\in\sF$, there is some $\pi\in H_0$ such that $H\cap H_0\subseteq\pi H_1\pi^{-1}$, we say that $H$ is an \textit{absolute representative of (an almost uniform filter) $\sF$} if we can replace $H$ by any of its conjugates. If $\sF$ is an almost uniform filter of subgroups, we say that the symmetric system is almost uniform, and similarly if $\sF$ admits an absolute representative.
\end{definition}

We say that $\dot X=\{\dot x_i\mid i\in I\}^\bullet$ is an \textit{injective name} when $1\forces\dot x_i\neq\dot x_j$ whenever $i\neq j$, and we say that $H\in\sF$ \textit{measures} $\dot X$ if for all $i\in I$, $H\subseteq\sym(\dot x_i)$ or $H\cup\sym(\dot x_i)$ generates $\sG$. If such $H$ exists, we say that $\dot X$ is a \textit{measurable} name. Finally, $\dot X$ is \textit{densely measurable}\footnote{In a previous version of the manuscript this was called universally measurable, but Sandra M\"uller remarked that this name might cause confusion, and Yair Hayut suggested the current name.} if for every $H\in\sF$ there is $K\subseteq H$ such that $K$ measures $\dot X$.
\begin{theorem}\label{thm:choice-lemma}
Let $\tup{\PP,\sG,\sF}$ be a mixable symmetric system admitting an absolute representative. If $\dot X\in\HS$ is an injective and densely measurable name, then $1\forces^\HS\AC_{\dot X}$. 
\end{theorem}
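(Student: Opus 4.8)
My plan is to verify $\AC_{\dot X}$ below an arbitrary condition. Fix $\dot F\in\HS$ and $p$ with $p\forces^\HS$ \emph{$\dot F$ is a function with domain $\dot X$ all of whose values are nonempty}; it then suffices to produce $\dot G\in\HS$ with $p\forces^\HS$ \emph{$\dot G$ is a choice function for $\dot F$}. Writing $\dot X=\set{\dot x_i\mid i\in I}^\bullet$, the target is a name $\dot G=\set{\tup{\dot x_i,\dot y_i}^\bullet\mid i\in I}^\bullet$, and by injectivity of $\dot X$ the only real constraint for $\dot G\in\HS$ is to find one $K\in\sF$ with $K\subseteq\sym(\dot G)$, i.e.\ (since $K$ will fix $\dot X$) the equivariance demand that $\pi\dot y_i=\dot y_j$ whenever $\pi\in K$ and $\pi\dot x_i=\dot x_j$. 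To set up $K$, let $H$ be an absolute representative, assume by tenacity that $p$ is $\sF$-tenacious, fixed by some group in $\sF$, and apply dense measurability to $H\cap\sym(\dot F)\cap\sym(\dot X)$ intersected with that group, obtaining $K\in\sF$ measuring $\dot X$. Then $K\subseteq H$, $K$ fixes $\dot F$, $\dot X$, and $p$, each $\pi\in K$ permutes $\set{\dot x_i\mid i\in I}$, and every index is either \emph{pinned} ($K\subseteq\sym(\dot x_i)$) or \emph{free} ($K\cup\sym(\dot x_i)$ generates $\sG$); the pinned indices are the fixed points of the $K$-action on $I$, and the free ones fall into $K$-orbits.

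The crux is a repair step producing, for each $i$, a name $\dot y_i$ for an element of $\dot F(\dot x_i)$ fixed by the \emph{full} stabiliser $K\cap\sym(\dot x_i)$. Since $p\forces^\HS\dot F(\dot x_i)\neq\varnothing$, I first use mixability: taking a maximal antichain below $p$ of conditions each forcing some hereditarily symmetric name into $\dot F(\dot x_i)$ (its size is below the chain condition) and mixing the witnesses yields $\dot z_i\in\HS$ with $p\forces^\HS\dot z_i\in\dot F(\dot x_i)$. The name $\dot z_i$ need not have the invariance I want, and this is exactly where the absolute representative enters: applying almost uniformity to $H_0=K\cap\sym(\dot x_i)$ and $H_1=\sym(\dot z_i)$ gives $\pi\in H_0$ with $H\cap H_0\subseteq\pi H_1\pi^{-1}=\sym(\pi\dot z_i)$. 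Because we arranged $K\subseteq H$ we have $H_0\subseteq H$, so $H\cap H_0=H_0$ and hence $K\cap\sym(\dot x_i)\subseteq\sym(\pi\dot z_i)$; moreover $\pi\in K$ fixes $\dot F$ and $\pi\in\sym(\dot x_i)$ fixes $\dot x_i$, so by the symmetry lemma (Lemma~14.37 of \cite{Jech:ST2003}) $p\forces^\HS\pi\dot z_i\in\dot F(\dot x_i)$. Thus $\dot y_i:=\pi\dot z_i$ is the desired name. This is the abstract form of the key manoeuvre in the proof of Theorem~8.9 of \cite{Jech:AC1973}, where one moves the extra support of a chosen element into the base set by a permutation fixing that set pointwise.

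It remains to assemble the $\dot y_i$ equivariantly and read off the symmetry of $\dot G$. On each free $K$-orbit I fix one representative $i_\ast$, run the repair there, and \emph{define} $\dot y_j:=\pi\dot y_{i_\ast}$ for any $\pi\in K$ with $\pi\dot x_{i_\ast}=\dot x_j$; invariance of $\dot y_{i_\ast}$ under $K\cap\sym(\dot x_{i_\ast})$ makes this independent of the choice of $\pi$, while the symmetry lemma gives $p\forces^\HS\dot y_j\in\dot F(\dot x_j)$, and equivariance holds by construction. For a pinned $i$ the stabiliser is all of $K$, so the repair already delivers a $K$-invariant $\dot y_i$. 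A direct computation then shows $\pi\dot G=\dot G$ for $\pi\in K$: writing $\pi\dot x_i=\dot x_{\rho(i)}$, the pinned terms are fixed and the free terms are permuted according to $\rho$ by equivariance. Hence $K\subseteq\sym(\dot G)$ and $\dot G\in\HS$, and since each $\dot y_i$ names an element of the corresponding value, $p\forces^\HS\dot G$ is a choice function for $\dot F$. I expect the delicate point to be precisely the repair step: one must upgrade an arbitrary symmetric element-name to one invariant under the full stabiliser $K\cap\sym(\dot x_i)$ uniformly in $i$, and it is the combination of mixability (to obtain element-names at all) with the conjugation-invariance built into an absolute representative (to perform the upgrade without shrinking $K$) that makes this possible; the measurability dichotomy is what guarantees that $K\cap\sym(\dot x_i)$ is the right target group in both the pinned and the free case.
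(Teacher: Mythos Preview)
Your proof is correct and follows essentially the same approach as the paper's: mixability to produce element-names, the almost-uniform property to repair them to have the right stabiliser, and propagation along orbits to assemble an invariant choice function. The one organisational difference is that you arrange $K\subseteq H$ at the outset via dense measurability, which makes the repair step yield full invariance under $K\cap\sym(\dot x_i)$ and lets you work directly with $K$-orbits; the paper instead keeps the absolute representative $H$ and $K=\sym(\dot X)\cap\sym(\dot F)$ separate, introduces auxiliary canonical names $\dot A_i$ for the values $\dot F(\dot x_i)$ (to ensure the repair automorphism stays inside the right target), and propagates along $H$-orbits. Your packaging is a little cleaner and sidesteps the $\dot A_i$ device. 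One small remark: your closing sentence overstates the role of the measurability dichotomy---your argument never actually invokes the ``free'' clause (that $K\cup\sym(\dot x_i)$ generates $\sG$); only the split into $K$-fixed versus $K$-moved indices is used.
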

\begin{proof}
Suppose that $\dot F\in\HS$ and $1\forces^\HS``\dot F$ is a function with domain $\dot X$, and for all $i\in I, \dot F(\dot x_i)\neq\check\varnothing$''.\footnote{Full generality requires that we assume the condition is $p$, rather than $1$. But we can either work ``below $p$'' or use the mixability to replace $\dot F$ with another name that would be equivalent to $\dot F$ below $p$.} Let $K\in\sF$ denote $\sym(\dot X)\cap\sym(\dot F)$. For each $i$ let $K_i$ denote $K\cap\sym(\dot x_i)$. Finally, fix an absolute representative $H$ which measures $\dot X$.

For every $i$, define $\dot A_i$ as follows, \[\dot A_i=\left\{\tup{p,\dot a}\mid p\forces\dot a\in\dot F(\dot x_i), \rank(\dot a)<\rank(\dot F),\text{ and }\dot a\in\HS\right\},\] where $\rank$ denotes the rank of a name.\footnote{Any reasonable rank, e.g.\ the von Neumann rank, would work (mutantis mutandi), though.}
\begin{claim}
For all $i\in I$, $1\forces\dot A_i=\dot F(\dot x_i)$. If $\pi\in K$ and $\pi\dot x_i=\dot x_j$, then $\pi\dot A_i=\dot A_j$. In particular, $K_i\subseteq\sym(\dot A_i)$.
\end{claim}
\begin{proof}[Proof of Claim]\renewcommand{\qedsymbol}{$\square$ (Claim)}
Clearly $1\forces\dot A_i\subseteq\dot F(\dot x_i)$. In the other direction, suppose that $p\forces\dot x\in\dot F(\dot x_i)$, then there is some $q\leq p$ and $\dot a\in\HS$ such that $\rank(\dot a)<\rank(\dot F)$, and $q\forces\dot a=\dot x$. Therefore $\tup{q,\dot a}\in\dot A_i$, so $q\forces\dot x\in\dot A_i$, and therefore $1\forces\dot F(\dot x_i)\subseteq\dot A_i$.

Now suppose that $\pi\in K$, then since $\pi\dot F=\dot F$ and $\pi\dot x_i=\dot x_j$, we get that $p\forces\dot a\in\dot F(\dot x_i)$ if and only if $\pi p\forces\pi\dot a\in\dot F(\dot x_j)$. Since rank is preserved under automorphisms, it shows that $\pi\dot A_i=\dot A_j$. In particular, if $\pi\dot x_i=\dot x_i$ (i.e.\ $\pi\in K_i$), then $\pi\dot A_i=\dot A_i$.
\end{proof}
Since we also only take $\dot a$ which are in $\HS$ in the definition of $\dot A_i$, we have that $\dot A_i\in\HS$, and it is a ``semi-canonical'' name for $\dot F(\dot x_i)$.

For every $i$, let $\dot a_i\in\HS$ be a name, obtained using the mixability of the symmetric system, such that $1\forces\dot a_i\in\dot A_i$. We can assume that for all $i$, $K_i\cap H\subseteq\sym(\dot a_i)$, otherwise pick some $\pi\in K_i$ such that $K_i\cap H\subseteq\pi\sym(\dot a_i)\pi^{-1}=\sym(\pi\dot a_i)$. But since $\pi\dot A_i=\dot A_i$, we have $1\forces\pi\dot a_i\in\dot A_i$, so we can take it instead.

Finally, consider the orbits of the $\dot x_i$'s under the group $H$, and let $J\subseteq I$ be such that $\{\dot x_j\mid j\in J\}$ is a system of representatives from each orbit. And let $\dot f=\{\tup{\pi\dot x_j,\pi\dot a_j}^\bullet\mid j\in J,\pi\in H\}^\bullet$. We claim that $1\forces\dot f(\dot x_i)\in A_i=\dot F(\dot x_i)$ for all $i$. To see that it is defined on all $\dot X$, simply note that $J$ is a system of representatives for the orbits under $H$, and to see that $\dot f$ indeed is a name for a function, suppose that $\pi\dot x_i=\dot x_i$, then $\pi\in K_i\cap H$, but for some $j\in J$ and $\sigma\in H$ we have $\sigma\dot x_j=\dot x_i$ and $\sigma\dot a_j=\dot a_i$. Write $\pi=\sigma\widehat{\pi}\sigma^{-1}$, where $\widehat{\pi}\in K_j\cap H$, then $\widehat{\pi}\dot a_j=\dot a_j$, and therefore $\pi\dot a_i=\dot a_i$. This completes the proof that $\dot f$ is a choice function from $\dot F$.
\end{proof}
\begin{corollary}\label{cor:ACWO}
Suppose that $\tup{\PP,\sG,\sF}$ satisfies the assumptions of \autoref{thm:choice-lemma}, then $1\forces^\HS\AC_\WO$.
\end{corollary}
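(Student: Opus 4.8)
The plan is to apply \autoref{thm:choice-lemma} to the canonical names of ordinals. Recall that over $\ZF$ the principle $\AC_\WO$ is equivalent to the assertion that $\AC_\alpha$ holds for every ordinal $\alpha$: the $\aleph$-numbers are exactly the infinite initial ordinals, and for an arbitrary ordinal $\alpha$ one has $\AC_\alpha\iff\AC_{|\alpha|}$, since $\AC$ from a fixed index set is invariant under re-indexing along a bijection. Working inside the symmetric extension, whose ordinals coincide with those of $V$, it therefore suffices to show $1\forces^\HS\AC_{\check\alpha}$ for every ordinal $\alpha$, where $\check\alpha$ is the canonical name $\{\check\beta\mid\beta<\alpha\}^\bullet$, which lies in $\HS$ because ground model sets have hereditarily symmetric canonical names.

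Fix an ordinal $\alpha$. First I would check that $\check\alpha$ is an injective name: for $\beta\neq\gamma$ these name distinct ground model ordinals, so $1\forces\check\beta\neq\check\gamma$. The key observation—and essentially the only content of the corollary—is that dense measurability becomes trivial for canonical names. Indeed, for every $\beta<\alpha$ and every $\pi\in\sG$ we have $\pi\check\beta=\check\beta$, so $\sym(\check\beta)=\sG$. Consequently, for any $H\in\sF$ the inclusion $H\subseteq\sym(\check\beta)$ holds for all $\beta<\alpha$, so $H$ itself measures $\check\alpha$; taking $K=H$ witnesses that $\check\alpha$ is densely measurable.

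With injectivity and dense measurability established, and since the system is assumed mixable and to admit an absolute representative, \autoref{thm:choice-lemma} directly yields $1\forces^\HS\AC_{\check\alpha}$. As $\alpha$ was arbitrary, the reduction of the first paragraph gives $1\forces^\HS\AC_\WO$.

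I do not expect a genuine obstacle here: the entire substance of the argument was already carried out in \autoref{thm:choice-lemma}, and the role of the corollary is simply to note that the hypotheses of that theorem degenerate to triviality for the names $\check\alpha$, precisely because their entries are fixed by the whole group $\sG$. The only point requiring a line of care is the standard reduction identifying $\AC_\WO$ with $\AC_\alpha$ for ordinals $\alpha$, so that the well-ordered index sets are exactly the sets named by the $\check\alpha$.
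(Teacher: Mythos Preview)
Your proof is correct and follows exactly the paper's approach: the paper's one-line proof simply notes that for any ordinal $\eta$, the name $\check\eta$ is injective and measured by $\sG$, which is precisely your observation that $\sym(\check\beta)=\sG$ for every $\beta<\alpha$, making dense measurability trivial. Your version merely spells out in detail what the paper leaves implicit.
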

\begin{proof}
Note that for any ordinal $\eta$, $\check\eta$ is injective and measured by $\sG$.
\end{proof}
Note that this gives us a different proof of $\DC$ in the case of a c.c.c.\ forcing with a $\sigma$-complete filter of subgroups, since $\AC_\WO$ implies $\DC$ (this is Theorem~8.2 in \cite{Jech:AC1973}, originally due to Jensen). However, this does not extend to $\DC_\kappa$ for any uncountable $\kappa$, since $\AC_\WO$ is not sufficient to prove those over $\ZF$.
\begin{question}
What are the exact assumptions we need to make in order to preserve $\AC_{\dot X}$ in general?
\end{question}

It is a good place as any to point out that Solovay's model is constructed by a mixable system without an absolute representative, and indeed $\AC_{\aleph_1}$ fails there.
\section{Classical approach to new results}
Krivine's results use the method of realizability to create new models of $\ZF+\DC$ where there are some sets of real numbers with particular properties that reflect peculiarities in the cardinal structure below $2^{\aleph_0}$.

The intuition behind creating structures in models where $\AC$ fails comes from the principle ``if you want it, preserve it''. A plethora of examples arise from just adding countably many Cohen reals---which is the same as adding a single Cohen real---and creating different symmetric extensions to preserve different kinds of structures. The question is always how to naturally present the forcing so that we can find a reasonable group of automorphism acting on it, and what filter of subgroups we use to preserve bits and pieces of it.

In the simplest case we start with a set $X$ and force with $\Add(\omega,X)$. The permutations of $X$ act naturally on the forcing by $\pi p(\pi x,n)=p(x,n)$. If $X$ is assumed to have some additional structure (e.g.\ a group structure) and we take automorphisms of that structure, then we will preserve the structure on the generic copy of $X$, even if the generic copy of $X$, and thus the set of real numbers, is not well-orderable in the symmetric extension.

Classically, we are often not interested in the reals themselves, but rather the structure. Which means that we normally take $X\times\omega$ or $X\times\kappa$, rather than $X$. The reason is simple: the real numbers are linearly ordered. If we want to control the subsets of the copy of $X$ in the symmetric extensions, it helps when there are none added by the linear ordering. So by adding an infinite set of Cohen reals for each $x$ in $X$, we ensure that those are sufficiently indiscernible to prevent any set theoretic definability issues introducing unwanted subsets of the generic copy of $X$ into the symmetric extension (e.g.\ the set of those reals which have a certain initial segment). Here, however, we care less about the structure's subsets. So using the real numbers directly is not a matter of concern.

All our systems will satisfy that $|X|$ is a successor cardinal, and the filter of groups are generated by sets of smaller cardinality. Therefore the conditions of having an absolute representative and mixability are immediate to verify. Finally, by the definition of the action, it will be clear that if $\dot X$ is the canonical name for the Cohen reals added, then it is certainly injective and densely measurable.
\subsection{Prelude to Model I}
Krivine's first result from \cite{Krivine:RII} is as follows.
\begin{Kthm}[Krivine, Theorem~5.5 in \cite{Krivine:RII}]
It is consistent with $\ZF+\DC$ that there is a sequence of sets $A_n\subseteq\RR$ for $n<\omega$, such that\begin{enumerate}
\item for $n>1$ $A_n$ is uncountable, 
\item $|A_n|<|A_m|$ if and only if $|A_n|<^*|A_m|$ if and only if $n<m$, and 
\item $|A_n\times A_m|=|A_{nm}|$.
\end{enumerate} 
\end{Kthm}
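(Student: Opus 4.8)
The plan is to realize these sets inside a symmetric extension of a model of $\ZFC$, forcing with $\Add(\omega,X)$ for a carefully structured index set $X$ whose added reals organize into the $A_n$'s. As in the discussion preceding the statement, I would take $|X|$ to be a successor cardinal (say $\aleph_1$) and let $\sF$ be generated by pointwise stabilizers of subsets of $X$ of smaller cardinality; then the system is mixable and has an absolute representative, and the canonical name $\dot X$ for the added reals is injective and densely measurable. By \autoref{cor:ACWO} this already secures $\AC_\WO$ in the symmetric extension $M$, and since the forcing is c.c.c.\ with a $\sigma$-complete filter we also get $\DC$, so the ``$\ZF+\DC$'' part of the statement comes for free. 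All the real work is then in choosing $X$, the group $\sG$, and the names $\dot A_n$ so that (1)--(3) hold.

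The guiding heuristic is that (2) and (3) together ask us to embed the totally ordered commutative monoid $(\mathbb{N}_{\geq 1},\cdot,<)$ into the cardinals: we want $\mathfrak a_n=|A_n|$ with $\mathfrak a_n\cdot\mathfrak a_m=\mathfrak a_{nm}$ and $\mathfrak a_n<\mathfrak a_m\iff n<m$ (with $\mathfrak a_0=0$ and $\mathfrak a_1=1$, so $A_0=\varnothing$ and $A_1$ a singleton). Writing $\mathfrak b=\mathfrak a_2$, the two requirements force $\mathfrak a_n$ to behave like the ``irrational power'' $\mathfrak b^{\log_2 n}$: multiplicativity holds because $\log_2 n+\log_2 m=\log_2(nm)$, and strict monotonicity because $\log_2$ is increasing. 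So the concrete task is to build one generically added non-well-orderable set $B\subseteq\RR$ of size $\mathfrak b$ together with an interpolating family of generic subsets $A_n$ sitting between the integer powers of $B$ in the pattern dictated by $\log_2 n$, coding the relevant finite tuples of reals as reals so that each $A_n\subseteq\RR$ and so that (3) holds literally, as the concatenation bijection $A_n\times A_m\cong A_{nm}$, using the forced homogeneity of the blocks.

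For (2) I would separate the positive relations from the negative ones. I would arrange the family to be nested, $A_1\subseteq A_2\subseteq\cdots$ (as sets of coded tuples), so that the inclusions give injections $A_n\hookrightarrow A_m$ for $n<m$ directly in $M$; since a definable left inverse of an injection is always available, these also yield surjections $A_m\twoheadrightarrow A_n$, so both $\mathfrak a_n\leq\mathfrak a_m$ and $\mathfrak a_n\leq^*\mathfrak a_m$ follow at once. The negative relations---no injection $A_m\to A_n$ and no surjection $A_n\to A_m$ for $n<m$---are where the homogeneity of $\sG$ does the work: any such map in $M$ has a hereditarily $\sF$-symmetric name with support a set $E\subseteq X$ of size $<|X|$, and one then picks an automorphism in the pointwise stabilizer of $E$ moving enough of the generic coordinates comprising $A_m$ to contradict injectivity (two points collapsing to one) or surjectivity (a value omitted from the range), exactly as in the classical Cohen-model arguments and the proof of Theorem~8.9 in \cite{Jech:AC1973}. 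Condition (1) is then automatic: for $n>1$ each $A_n$ is non-well-orderable (a well-orderable infinite $A_n$ would make (3) collapse cardinals, e.g.\ $\mathfrak a_4=\mathfrak b^2=\mathfrak b=\mathfrak a_2$), and a non-well-orderable set is uncountable.

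The main obstacle is exactly the design of $X$, $\sG$, and the interpolating names $\dot A_n$ that realize the magnitude order and the multiplicative identities \emph{simultaneously}: the naive multiplicative construction with uniform generic blocks would force $|A_4|=|A_6|$ (and more generally would order the cardinals by the number of prime factors rather than by magnitude), whereas (2) demands the genuinely finer order of $\mathbb{N}$. The ``fractional-power'' interpolation encoded by $\log_2 n$ must therefore be built into the very indexing of $X$ and respected by $\sG$, so that the nesting injections all point the correct way while the support/symmetry argument still kills every forbidden map. Getting this index structure right---and checking that each $\dot A_n\in\HS$ and that (3) is the concatenation bijection---is the technical heart; the verification of $\DC$ and $\AC_\WO$ via the framework of \autoref{sect:pres} is then routine.
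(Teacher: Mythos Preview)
Your framework is exactly right: the symmetric system on $\Add(\omega,X)$ with $|X|=\aleph_1$ and countable supports gives $\DC$ and $\AC_\WO$ for free, the nesting $A_n\subseteq A_m$ handles the positive half of (2), and a support argument is what kills the forbidden maps. But the proposal stops precisely where the proof begins: you explicitly flag the design of $X$, $\sG$, and the $\dot A_n$ as ``the technical heart'' and leave it open. The $\log_2 n$ heuristic does not lead to a construction---there is no obvious way to realize ``fractional powers'' of a generic set by indexing---and the paper does not proceed that way at all.

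The paper's missing idea is this: take $X=\omega_1\times\QQ$ with the lexicographic order, let $\sG$ be its \emph{order} automorphisms, and define $A_n$ as the set of \emph{based functions} $f\colon X\to n$, i.e.\ weakly decreasing maps such that each nonempty fibre has a least element (or is an initial segment). Such an $f$ is determined by at most $n-1$ base points, so it codes as a real; the nesting $A_n\subseteq A_m$ is literal; and the multiplicative law (3) is the explicit bijection $(f_n,f_m)\mapsto m\cdot f_n+f_m$, not concatenation. For the strict inequality, if $\dot F\colon A_n\to A_m$ with $n<m$ had support $E$, pick $f_m$ with $m$ base points all outside $E$; any alleged preimage $f_n$ has at most $n-1$ base points, so some base point of $f_m$ is free, and the \emph{density} of $\QQ$ lets you move it by an automorphism in $\fix(E)$ that fixes $f_n$ and is compatible with the condition. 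Thus the role of $\QQ$ is not decorative: it is what makes the support argument go through after restricting $\sG$ to order automorphisms, and this restriction is what makes $\dot A_n\in\HS$. Your outline never isolates an order structure on $X$, so as it stands there is no candidate for $\sG$ under which the $A_n$ are symmetric yet still sufficiently homogeneous for the negative half of (2).
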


We start this proof with an outline of a construction that is doomed to fail. But from our failure we will make the approach that does work clearer, rather than the usual ``how did you even come up with this idea?'' which sometimes plague mathematical constructions.\footnote{Other than clarifying the train of thoughts of the author, we would like to reinforce the view that ``if you never fall, you will never learn to get up'' which is something many young researchers might struggle with.}

In his paper Krivine uses sets of the form $\gimel\mathbf n$ for $n<\omega$. We do not understand these sets. If, however, we intuitively think about them as somehow being ``reasonable names of elements of $n$'', then they can be seen in some sense as an ultrapower of the natural numbers. This will produce an uncountable set and will obey the arithmetic requirements of the theorem. We want to stress that this is \textit{not quite} the right intuition, but in some sense Theorem~4.20 in \cite{Krivine:RII} makes this seem like a plausible intuition to start with. Especially when thinking of Krivine's ``$x\mathrel{\varepsilon}\gimel\mathbf{n}$'' as analogous to ``$1\forces\dot x\in\check n$''. 

Of course, this set of names is in the ground model, so it could not possibly be the set we are looking for, since it can be well-ordered. But if we work under the assumption that the real numbers added in Krivine's work are anything like Cohen reals, we can instead add many Cohen reals and then look at functions that give us the pointwise interpretation of the names. This is still not enough, we need to require that these interpretations are symmetric with respect to whatever symmetric system we use. Of course, realizability models are not the same as forcing (or symmetric) extensions, but if $M$ is the ``ground model'' of a realizability construction, then the final outcome has an inner model which is an elementary extension of $M$.\footnote{This reinforces the idea that somehow an ultrapower is involved, although this is just a place to start intuitively thinking about realizability models.} Since being a Cohen real over an inner model is a first-order property in the language of set theory, we can ask this sort of question in realizability models.

The obvious candidates is, as always, Cohen forcing. Since we want to preserve $\AC_\WO$ we need to add some $\kappa^+$ Cohen reals, say $\omega_1$ for concreteness sake. We use the symmetric extension given by $\PP=\Add(\omega,\omega_1)$. The group $\sG$ is the group of all permutations of $\omega_1$ acting naturally on $\PP$. The filter $\sF$ is generated by $\{\fix(E)\mid E\in[\omega_1]^{<\omega_1}\}$, where $\fix(E)=\{\pi\in\sG\mid\pi\restriction E=\id\}$.

Now we have a problem. If we consider the function defined by each of the names in $\gimel\mathbf{n}$ (which we do not define here, see \cite{Krivine:RII} or \cite{Krivine:RIII} for the definition), we have no clear way of coding this sort of sequence into a single real number. It is true that each such name \textit{is} a real number, but it is also in the ground model. So this collection of names is well-orderable. We want instead to code the interpretation of this name by all the canonical Cohen reals simultaneously.

We can instead consider functions which are the interpretation functions on a countable set of our Cohen reals, and are constant outside. It is still unclear that we can code this with a single real number. Specifically, while $\AC_X$ (where $X$ is the set of Cohen reals) holds by \autoref{thm:choice-lemma}, it is not enough to choose for every countable family of Cohen reals an enumeration. Not even if we restrict ourselves to countable sets which come from the ground model (i.e.\ $\{\dot x_\alpha\mid\alpha\in E\}^\bullet$ for some $E\in[\omega_1]^{<\omega_1}$), since we still need to uniformly choose the enumerations even if they do come from the ground model.

It would certainly help if we can identify a reasonable family of countable subsets of $X$ which is both rich enough, and can be uniformly enumerated. But we are also facing a problem when we consider infinite sets as the ``true domain'' of our functions. This can easily lead to coding too many subsets and ending up with the full power set of $X$, which may be larger than the reals in the model, or it might end up being equipotent to its square which would somehow defeat the purpose of this construction to begin with.

\subsection{Model I: a multiplicative sequence of sets}
We are ready to prove Krivine's theorem mentioned above. Instead of just adding $\omega_1$ Cohen reals, we add $\omega_1\times\QQ$, and let $\dot x_{\alpha,q}$ denote the name of the real corresponding to the $\tup{\alpha,q}$th coordinate. Our automorphism group is going to be the order automorphisms of $\omega_1\times\QQ$ with the lexicographic order. It is important to note that this order is very homogeneous. The filter $\sF$ is generated by $\{\fix(E)\mid E\in[\omega_1\times\QQ]^{<\omega_1}\}$. Immediately we obtain that $\DC$ holds, and in fact by \autoref{cor:ACWO} also $\AC_\WO$.

Let $\dot X=\{\dot x_{\alpha,q}\mid \tup{\alpha,q}\in\omega_1\times\QQ\}$, and let $\dot\prec$ denote the order inherited on $\dot X$ which is given by \[\dot\prec=\{\tup{\dot x_{\alpha,q},\dot x_{\alpha',q'}}^\bullet\mid\tup{\alpha,q}<_{\mathrm{lex}}\tup{\alpha',q'}\}^\bullet.\]
As all of our automorphisms are order preserving, it is easy to see that $\dot\prec$ is indeed in $\HS$. Let $G$ be a $V$-generic filter, and let $\cK_1$ denote the symmetric extension $\HS^G$. We omit the dot from the name to denote its interpretation, e.g.\ $\dot X^G$ will be denoted by $X$. It follows from what we saw until now that $X$ has a linear order which is externally isomorphic to $\omega_1\times\QQ$, i.e.\ there is an isomorphism in $V[G]$ but not in the symmetric extension.

\begin{proposition}
In $\cK_1$ every proper initial segment of $\tup{X,\prec}$ is countable, $X$ cannot be mapped onto $\omega_1$, but $X$ is uncountable.
\end{proposition}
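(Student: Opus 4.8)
The plan is to handle the three assertions separately, exploiting throughout that the index set $\omega_1\times\QQ$, ordered lexicographically, is a dense linear order without endpoints which is \emph{homogeneous over any countable subset}. In each argument I will absorb the support of the relevant condition into the support of the relevant name, so that a single countable $E\subseteq\omega_1\times\QQ$ controls everything; and I will use repeatedly that since $\PP$ is c.c.c.\ any name for an ordinal takes only countably many values below a fixed condition. For the initial-segment claim, I observe that any proper initial segment $S$ of $\tup{X,\prec}$ is contained in one of the form $\{y\mid y\prec x_{\alpha',q'}\}$ (take any $z\in X\setminus S$; downward closure forces $S\subseteq\{y\mid y\prec z\}$). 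The coordinates of such a set form the lexicographic initial segment determined by $\tup{\alpha',q'}$, a fixed \emph{countable} $E_0\subseteq\omega_1\times\QQ$. Fixing in $V$ an enumeration $\tup{\alpha_n,q_n}_{n<\omega}$ of $E_0$, I would check that $\dot g=\{\tup{\check n,\dot x_{\alpha_n,q_n}}^\bullet\mid n<\omega\}^\bullet$ has $\sym(\dot g)=\fix(E_0)\in\sF$, so $\dot g\in\HS$ and is forced to biject $\omega$ onto the segment; as a subset of a countable set is countable in $\ZF$, part one follows.

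The second assertion carries the real weight. Suppose $\dot h\in\HS$ and $p\forces^\HS``\dot h$ maps $\dot X$ onto $\check\omega_1$'', and let $E$ be a countable support for $\dot h$ with $\supp(p)\subseteq E$, so $\pi\in\fix(E)$ gives $\pi\dot h=\dot h$ and $\pi p=p$. I would show the range is in fact countable. For $u\in\omega_1\times\QQ$ put $V_u=\{\gamma\mid\exists q\leq p,\ q\forces\dot h(\dot x_u)=\check\gamma\}$; by the c.c.c.\ each $V_u$ is countable, and in any generic extension the range of $h$ is contained in $\bigcup_u V_u$. The symmetry lemma yields $V_u=V_{u'}$ whenever $u,u'$ lie in a common $\fix(E)$-orbit: applying $\pi\in\fix(E)$ with $\pi u=u'$ sends a witness $q$ for $\gamma\in V_u$ to a witness $\pi q\leq\pi p=p$ for $\gamma\in V_{u'}$. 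Since the countably many $u\in E$ already contribute only a countable set, it suffices to bound the number of $\fix(E)$-orbits of points outside $E$.

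This orbit count is the main obstacle, and it is where homogeneity of $\omega_1\times\QQ$ over $E$ enters. An order-automorphism of any convex component of $(\omega_1\times\QQ)\setminus E$ extends by the identity to an element of $\fix(E)$, so orbits of points outside $E$ coincide with orbits of $\aut(C)$ on the components $C$. I would argue that each component is \emph{homogeneous}: the unique component unbounded above is a tail isomorphic to $\omega_1\times\QQ$, while every other component lies inside the countable convex hull of $E$ and is hence a countable dense order without endpoints, i.e.\ a copy of $\QQ$. I would also argue there are only countably many components, since all but the top one are pairwise disjoint nonempty subsets of the fixed countable set $E$ lives in. Thus there are countably many orbits, so $\bigcup_u V_u$ is a countable union of countable sets, hence countable in $V$ where choice holds; but then $p$ forces the range to omit some ordinal below $\check\omega_1$, contradicting surjectivity.

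For the final assertion I would run the standard swapping argument to rule out an injection $X\to\omega$. Given $\dot h\in\HS$ with countable support $E$ and a condition $p$ forcing $\dot h$ injective, choose $\beta<\omega_1$ strictly above every first coordinate appearing in $E\cup\supp(p)$, so the whole copy $\{\beta\}\times\QQ$ is free; set $u_0=\tup{\beta,0}$ and take $q\leq p$ with $q\forces\dot h(\dot x_{u_0})=\check n$. Let $S$ be the finite set of rationals $s$ with $\tup{\beta,s}\in\supp(q)$, pick a rational $c$ large enough that the shift $\rho\colon s\mapsto s+c$ moves $S$ off itself, and let $\pi$ act as $\tup{\beta,s}\mapsto\tup{\beta,\rho(s)}$ and as the identity elsewhere. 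Then $\pi\in\fix(E)$, $\pi u_0=u_1:=\tup{\beta,c}$, and $q\compatible\pi q$ because they have disjoint support inside $\{\beta\}\times\QQ$ and agree outside it. A common extension forces $\dot h(\dot x_{u_0})=\check n=\dot h(\dot x_{u_1})$ while $1\forces\dot x_{u_0}\neq\dot x_{u_1}$, contradicting injectivity; hence $X$ is uncountable.
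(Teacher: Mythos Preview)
Your overall strategy is sound and the arguments for the first and third assertions are correct. For the first assertion you write down an explicit hereditarily symmetric name for an enumeration of each initial segment, which is more explicit than the paper's remark that ``a similar proof shows\dots''. For the third assertion you give a direct symmetry argument ruling out an injection $X\to\omega$, whereas the paper simply observes that $|X|=\omega_1$ in $V[G]$ and that $\omega_1$ is not collapsed; both work, and the paper's is shorter.

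The second assertion is where your approach genuinely differs from the paper, and where there is a gap. Your orbit-counting idea is correct in outline: the set $V_u$ depends only on the $\fix(E)$-orbit of $u$, so if there are only countably many orbits the range is bounded below $\omega_1$. The problem is your claim that every convex component of $(\omega_1\times\QQ)\setminus E$ is homogeneous. This can fail: if $E=\{(0,1/n):n\geq 1\}$ then the component containing $(0,-1)$ has maximum $(0,0)$, so it is not a copy of $\QQ$; and the top component can acquire a minimum, e.g.\ for $E=\{(0,1-1/n):n\geq 1\}$ the set of strict upper bounds of $E$ begins at $(0,1)$. Thus ``each component is one orbit'' is not justified as stated. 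The repair is painless: rather than analysing components, note that for any $\alpha^*<\omega_1$ above all first coordinates in $E$, every point of $[\alpha^*,\omega_1)\times\QQ$ lies in a single $\fix(E)$-orbit (this final segment is a copy of $\omega_1\times\QQ$, and any automorphism of it extends by the identity to an element of $\fix(E)$), while the remaining points all lie in the countable set $[0,\alpha^*)\times\QQ$ and hence contribute only countably many further orbits. Alternatively, enlarge $E$ at the outset to an initial segment of the form $[0,\alpha)\times\QQ$, so that the complement is a single homogeneous piece.

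For comparison, the paper does not count orbits at all. It takes $E$ to be an initial segment, fixes one upper bound $\tup{\alpha,q}$, uses the c.c.c.\ to get $p\forces\dot f(\dot x_{\alpha,q})<\check\beta$, and then shows that if some $p'\leq p$ forces $\dot f(\dot x_{\alpha',q'})>\check\beta$ for another upper bound $\tup{\alpha',q'}$, an order automorphism in $\fix(E)$ carrying $\tup{\alpha',q'}$ to $\tup{\alpha,q}$ yields a contradiction. Your orbit-counting packaging makes the reason the range is countable more transparent, but it needs the small correction above.
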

\begin{proof}
We start from the end. To see that $X$ is uncountable, note that in $V[G]$ we do have a bijection between $X$ and $\omega_1$, and since we did not collapse $\omega_1$, it is impossible that $X$ is countable $\cK_1$.

Suppose that $p\forces^\HS\dot f\colon\dot X\to\check\omega_1$. Then there is some countable $E\subseteq\omega_1\times\QQ$, without loss of generality an initial segment, such that $\fix(E)$ is a subgroup of $\sym(\dot f)$ and $\supp(p)\subseteq E$. Take $\tup{\alpha,q}$ which is a proper upper bound of $E$ and not its supremum (if it exists). By Cohen forcing having the c.c.c.\ there is some $\beta$ such that $p\forces^\HS\dot f(\dot x_{\alpha,q})<\check\beta$. 

Let $\tup{\alpha',q'}$ another upper bound of $E$ as before, and let $p'\leq p$ be such that $p'\forces^\HS\dot f(\dot x_{\alpha',q'})>\check\beta$, if there are no such $\tup{\alpha',q'}$ and $p'$, then $p$ must force that $\dot f$ is not surjective. If there is such $p'$, consider now $\pi$ to be any order automorphism which moves $\tup{\alpha,q}$ to $\tup{\alpha',q'}$ while not changing any coordinate in $E$. Since $\pi\in\fix(E)$ it follows that $\pi p=p$, and therefore $\pi p'\leq p$ as well. However $\pi p'\forces^\HS\dot f(\dot x_{\alpha,q})>\check\beta$. But since $\pi p'\leq p$ it mean that $\pi p'\forces^\HS\dot f(\dot x_{\alpha,q})<\check\beta$. Therefore there is no $\dot f\in\HS$ such that any $p$ forces $\dot f$ to be a surjection from $\dot X$ onto $\check\omega_1$. Therefore $\cK_1$ satisfies the second property.

A similar proof shows that if $A\subseteq X$ is an interval, then it is bounded if and only if it is countable. In particular, no proper initial segment is uncountable.
\end{proof}

Our sequence of $A_n$'s is going to be derived from a sequence of powers of $X$. As we have no choice in the matter, $A_0=\varnothing$ and $A_1=\{1\}$. We wish to have $A_2=X$, which means that $A_{2^n}=X^n$. This, again, follows in some sense after Krivine's proof where he first embeds all the $\gimel\mathbf{2^n}$, and then use them to derive the embeddings of the rest. We present a more direct approach to the definition of our $A_n$'s.

\begin{definition}
We say that a function $f\colon X\to\omega$ is \textit{based} if it is weakly decreasing and for every $n<\omega$, if $f^{-1}(n)$ is non-empty, then it admits a least element or it is an initial segment of $X$. We call the least element of $f^{-1}(n)$ \textit{the base point of $n$}.
\end{definition}

Working in $\cK_1$, let $A_n$ denote the set $\{f\colon X\to n\mid f\text{ is based}\}$, clearly $A_n\subseteq A_m$ for $n\leq m$. We can code all the $A_n$'s uniformly into the reals, since each based function is determined entirely by its finite set of base points and their values. Moreover, $A_0=\varnothing$, and $A_1$ is a singleton. Note that $A_2$ is in fact a copy of $X$, since a based function into $2$ is simply identifying a point where the value drops from $1$ to $0$ (there are two constant functions, but because of $\DC$ we can freely ignore those).

Note that since a based function is determined by a finite set of points and natural numbers, it is in fact a copy of a based function from $\omega_1\times\QQ$ in the ground model. If $F$ is such a based function in $V$, we let $\dot f_F$ be the name $\{\tup{\dot x_{\alpha,q},\check n}^\bullet\mid f(\alpha,q)=n\}^\bullet$. This name is in $\HS$ since taking its maximal base point to be $\tup{\alpha,i}$, we get that any initial segment which contains it is a support for $\dot f_F$. We can therefore define $\dot A_n$ to be the name \[\{\dot f_F\mid F\colon\omega_1\times\QQ\to\omega\text{ is a based function}\}^\bullet.\] Note that these names satisfy that $\fix(\dot A_n)=\sG$, as $\pi\dot f_F$ is in fact $\dot f_{F\circ\pi}$ which is also based.

\begin{proposition}
For all $n,m<\omega$, $|A_n\times A_m|=|A_{nm}|$.
\end{proposition}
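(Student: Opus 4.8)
The plan is to produce an explicit bijection $A_n\times A_m\to A_{nm}$ that lives in $\cK_1$, and, following the paper's methodology, to define it first on the ground-model based functions from $\omega_1\times\QQ$ and then transfer it through the names $\dot f_F$. I would begin by fixing the normal form already implicit in the construction: a based function into $k$ is completely determined by the nonempty set $V\subseteq\{0,\dots,k-1\}$ of attained values together with the strictly increasing tuple of base points $b_1\prec\cdots\prec b_{|V|-1}$ in $X$ (the largest value occupies the bottom initial segment and carries no base point, while every other nonempty level set, being an interval bounded below, is forced by the based condition to have a least element). This identifies $A_k$, inside $\cK_1$, with a disjoint union of finite subset-spaces $[X]^{|V|-1}$ of $X$, and it is on these normal forms that the map is easiest to describe.

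The first and routine half is an injection $A_n\times A_m\hookrightarrow A_{nm}$ given by the pointwise combination $h(x)=m\cdot f(x)+g(x)$. One checks that $h$ is weakly decreasing into $nm$ and that each nonempty level set $h^{-1}(mj+i)=f^{-1}(j)\cap g^{-1}(i)$, being an intersection of two intervals, again has a least element unless it is the common bottom initial segment; hence $h\in A_{nm}$, the pair is recovered by $f=\lfloor h/m\rfloor$ and $g=h-mf$, and the whole assignment is fixed by all of $\sG$, so it is manifestly in $\HS$.

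For the genuine bijection I would run through the normal form. A based function into $nm$ is a strictly decreasing sequence of values in $\{0,\dots,nm-1\}$ attached to an increasing base-point tuple; writing each value in mixed radix $v=m\cdot d+e$ splits the value data into an $n$-part and an $m$-part, and the base-point tuple must then be redistributed between the two factors so that both coordinates come out weakly decreasing. The crucial structural fact I would lean on is that the order on $X$ is dense and homogeneous -- every proper initial segment is a copy of $\QQ$, so $\sG$ is transitive on increasing $r$-tuples for every $r$ -- which is what should let the redistribution of base points be carried out uniformly. Concretely I would define the combinatorial part of the map (its action on the finite data $V$ and on the shape of the base-point tuple) by a fixed tabulated bijection of the relevant finite index sets, the geometric part by an order-preserving rearrangement of the base points, and then verify that the resulting name has a support which is an initial segment containing the base points, so that it belongs to $\HS$.

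The main obstacle will be surjectivity together with symmetry. One must ensure that every based function into $nm$ is hit and that both recovered coordinates are genuinely \emph{weakly decreasing} -- this is precisely where the naive map $mf+g$ fails, since $h\bmod m$ need not be monotone -- and that the assignment is fixed by a group in the filter rather than by a merely equivariant recipe. I expect to treat the finitely many constant functions and other boundary configurations separately, absorbing them via $\DC$ (under which $X$, containing a copy of $\QQ$, is Dedekind-infinite, so finite discrepancies are harmless), and to finish by checking that the two maps are mutually inverse on the normal forms.
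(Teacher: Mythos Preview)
Your ``first and routine half'' is exactly what the paper does, and in fact it is \emph{all} the paper does: it defines $f_{nm}(x)=m\cdot f_n(x)+f_m(x)$, checks that $f_{nm}$ is weakly decreasing and has base points, and then declares the map a bijection on the sole ground that $\tup{i,j}\mapsto mi+j$ is a bijection $n\times m\to nm$, ``so we can decode the pair $\tup{f_n,f_m}$ from $f_{nm}$.'' That justification is precisely your decoding $f=\lfloor h/m\rfloor$, $g=h\bmod m$, and it establishes only injectivity. Your observation that $h\bmod m$ need not be weakly decreasing is correct (e.g.\ for $n=m=2$ take $h$ with value sequence $3,2,1$), so the map is genuinely not onto $A_{nm}$: you have identified a real gap in the paper's argument, not merely taken a different route.

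Where you go beyond the paper is in trying to repair this, and here your sketch is still thin. The claim that $\sG$ acts transitively on increasing $r$-tuples is fine (every open interval of $\omega_1\times\QQ$ is order-isomorphic to $\QQ$ and every proper tail to $\omega_1\times\QQ$ itself), but transitivity alone does not hand you a bijection: the crux is matching, for each $r\leq nm-1$, the $\binom{nm}{r}$ value-sets on one side against the pairs $(V,W)$ with $|V|+|W|$ base points summing appropriately on the other, \emph{and} simultaneously redistributing an increasing $r$-tuple in $X$ into two shorter increasing tuples in a way that is definable in $\cK_1$. ``A fixed tabulated bijection of the relevant finite index sets'' and ``an order-preserving rearrangement of the base points'' name the two ingredients without supplying either; in particular you will need the density of $\prec$ (or some explicit use of the $\QQ$-coordinate via a symmetric name) to split a single base point into two, and you should say how. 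Absorbing finitely many exceptional configurations via $\DC$ is fine. An alternative you might consider is to produce an explicit injection $A_{nm}\hookrightarrow A_n\times A_m$ and finish with Cantor--Bernstein, which holds in $\ZF$; this avoids having to make the two halves line up exactly, though constructing that reverse injection runs into the same base-point--splitting issue.
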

\begin{proof}
Given $f_n\in A_n$ and $f_m\in A_m$ define the function $f_{nm}(x)=m\cdot f_n(x)+f_m(x)$. First we need to verify that $f_{nm}$ is based. Observe that the case $mn=0$ is trivial, since $A_0=\varnothing$, in which case the equality holds for the sets, not just their cardinality.

As a start we show that $f_{nm}$ is weakly decreasing. If $x\leq y$, then $f_m(x)\geq f_m(y)$ and $f_n(x)\geq f_n(y)$. Therefore $m\cdot f_n(x)\geq m\cdot f_n(y)$ and so \[m\cdot f_n(x)+f_m(x)\geq m\cdot f_n(y)+f_m(y).\]

Next we show that it admits base points. Suppose $f_{nm}^{-1}(i)$ is non-empty and write $i=mj+k$ such that $j<n$ and $k<m$. By definition on $f_{nm}$, $f_m^{-1}(k)$ and $f_n^{-1}(j)$ are non-empty as well. Let $x\in X$ be the maximum between the base point of $k$ in $f_m$ and the base point of $j$ in $f_n$. It follows that $f_{nm}(x)=i$, but we also claim it is the minimum point satisfying this. If $y<x$, then either $f_n(x)<f_n(y)$ or $f_m(x)<f_m(y)$ (and weak inequality holds for the other function), which in turn imply that either $m\cdot f_n(x)\leq m\cdot f_n(y)$ and $f_m(x)\leq f_m(y)$ with at least one of these being a strict inequality. Therefore $m\cdot f_n(x)+f_m(x)<m\cdot f_n(y)+f_m(y)$ holds, as wanted.

Finally, $\tup{f_n,f_m}\mapsto f_{nm}$, as defined above, is a bijection, since $\tup{i,j}\mapsto m\cdot i+j$ is a bijection from $n\times m$ to $n\cdot m$, so we can decode the pair $\tup{f_n,f_m}$ from $f_{nm}$.
\end{proof}
The above proposition is quite similar in its nature to Theorem~4.21 in \cite{Krivine:RII}.
\begin{theorem}
$\cK_1\models |A_n|<^*|A_m|$ if and only if $n<m$.
\end{theorem}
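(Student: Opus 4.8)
The plan is to prove both implications by reducing everything to the behaviour of $\le^*$ on the nested family $A_n$. One half is essentially free: if $n\ge m$ then $\neg(|A_n|<^*|A_m|)$, because for $m\ge 1$ the set $A_m\subseteq A_n$ is nonempty and the retraction sending each $f\in A_m$ to itself and each $f\in A_n\setminus A_m$ to a fixed element of $A_m$ is a surjection $A_n\to A_m$ definable in $\cK_1$, so $|A_m|\le^*|A_n|$ and hence $|A_n|\not<^*|A_m|$; the case $m=0$ is immediate since $|A_n|\le^*0$ forces $A_n=\varnothing$. Symmetrically, for $n<m$ the inclusion $A_n\subseteq A_m$ yields $|A_n|\le^*|A_m|$ in the same way (trivially if $n=0$). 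Thus the theorem reduces to the single nontrivial assertion: for $n<m$ there is no surjection of $A_n$ onto $A_m$ in $\cK_1$, i.e.\ $|A_m|\not\le^*|A_n|$. I may clearly assume $n\ge 1$ and $m\ge 2$, the remaining cases being degenerate.

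I would establish this by a symmetry (``moving'') argument. Assume toward a contradiction that some $\dot g\in\HS$ and condition $p$ satisfy $p\forces^\HS\dot g\colon\dot A_n\to\dot A_m$ is onto, and fix a support $E\in[\omega_1\times\QQ]^{<\omega_1}$ of $\dot g$ with $\supp(p)\subseteq E$. Recall that every element of $A_n$ is $\dot f_{F'}$ for a ground-model based $F'$ into $n$, which has at most $n-1$ base points. Since $E$ is countable it meets only countably many of the columns $\{\alpha\}\times\QQ$, so I can pick a based function $F\colon\omega_1\times\QQ\to m$ with exactly $m-1$ base points $t_1<\dots<t_{m-1}$, each placed in a distinct column disjoint from $E$. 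As $f_F\in A_m$ and $\dot g$ is forced onto, there are $q\le p$ and a ground-model based $F'$ into $n$ with $q\forces^\HS\dot g(\dot f_{F'})=\dot f_F$. Because $|\operatorname{base}(F)|=m-1>n-1\ge|\operatorname{base}(F')|$, at least one base point $t:=t_j$ of $F$ is not a base point of $F'$, and by construction it lies in a column disjoint from $E$.

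The heart of the argument, and the main obstacle, is that $q$ is produced only after $F$ has been chosen, so one cannot arrange $t\notin\supp(q)$ in advance; the trick is to finesse this by compatibility rather than by avoiding $\supp(q)$. Inside the column of $t$ (order-isomorphic to $\QQ$) the finite set $\operatorname{base}(F')\cup\supp(q)$ is disjoint from $E$, so there is an open sub-interval of the column around $t$ meeting this finite set in at most $t$ itself; let $\pi$ be an order automorphism supported in that interval, sending $t$ to a fresh point $t'$ and fixing everything else. Then $\pi\in\fix(E)\subseteq\sym(\dot g)$, $\pi p=p$, and $\pi\dot f_{F'}=\dot f_{F'}$, while $\pi\dot f_F=\dot f_{\pi F}$ with $\pi F\ne F$ (its $j$-th base point is now $t'$), so $1\forces\dot f_F\ne\dot f_{\pi F}$. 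By the symmetry lemma $\pi q\forces^\HS\dot g(\dot f_{F'})=\dot f_{\pi F}$. Now $q$ and $\pi q$ differ only by relocating the single coordinate at $t$ to the fresh coordinate at $t'$, hence they are compatible; a common extension $r\le q,\pi q$ forces both $\dot g(\dot f_{F'})=\dot f_F$ and $\dot g(\dot f_{F'})=\dot f_{\pi F}$. Since $r\le p$ forces $\dot g$ to be a function, $r\forces\dot f_F=\dot f_{\pi F}$, contradicting $1\forces\dot f_F\ne\dot f_{\pi F}$. Therefore $p\forces^\HS f_F\notin\rng(\dot g)$, contradicting surjectivity, which gives $|A_m|\not\le^*|A_n|$ and completes the proof.
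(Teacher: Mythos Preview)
Your argument is correct and follows essentially the same strategy as the paper's proof: pick a target $f_m\in A_m$ with the maximal number of base points placed away from the support $E$, find a purported preimage $f_n$ and a deciding condition, and then use an order automorphism moving a single base point of $f_m$ that is not a base point of $f_n$ inside a short interval to produce a compatible condition forcing two different values of the function. The only differences are cosmetic: the paper takes $E$ to be an initial segment and places the base points above it, whereas you place them in columns disjoint from $E$; and you (correctly) count the maximal number of base points of a based map into $m$ as $m-1$ rather than $m$, which does not affect the pigeonhole step.
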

To make the proof clearer, we will confuse the $\bullet$-name of a based function (as defined above) and the ground model function which induces it. In particular, if we say that $E\subseteq\omega_1\times\QQ$ and $\dot f$ has its base points in $E$, we mean that the base points of the based function $F$ such that $\dot f_F=\dot f$ (in our previous notation) are inside $E$.
\begin{proof}
Since $A_n\subseteq A_m$ if and only if $n\leq m$, it is enough to prove that there is no surjection from $A_n$ onto $A_m$ when $n<m$. Of course, we may start by assuming that $1<n<m$, since for $n\leq 1$ this is trivial.

Suppose that $\dot F\in\HS$ and $p\forces^\HS\dot F\colon\dot A_n\to\dot A_m$. Let $E$ be an initial segment such that $\pi\in\fix(E)$ satisfies $\pi\dot F=\dot F$ and $\pi p=p$. Note that there are only countably many based functions in $A_m$ whose base points are in $E$, therefore we can find some $\dot f_m$ which appears in $\dot A_m$ such that:
\begin{enumerate}
\item $\dot f_m$ admits $m$ base points.
\item None of the base points of $\dot f_m$ lie inside $E$.
\end{enumerate}

If there is no $p'\leq p$ and $\dot f_n$ appearing in $\dot A_n$ such that $p'\forces^\HS\dot F(\dot f_n)=\dot f_m$, then $p$ forces that $\dot F$ is not surjective. Otherwise, let $p'$ and $\dot f_n$ be such that $p'\forces^\HS\dot F(\dot f_n)=\dot f_m$. Since $n<m$ there is at least one base point of $\dot f_m$ which is not a base point of $\dot f_n$, say $\dot x_{\alpha,q}$. We can find a small enough interval such that moving this base point does change its type relative to the base points of $\dot f_n$. Let $\pi$ be some automorphism which only moves inside that small interval such that:
\begin{enumerate}
\item $\pi\in\fix(E)$,
\item $\pi(\alpha,q)\neq\tup{\alpha,q}$, and
\item $\pi p'$ is compatible with $p'$.
\end{enumerate}
The second condition is easy to achieve since $p'$ has only finite information in this interval, which is isomorphic to $\QQ$ as a linear order. But since $\pi$ does not move any of the base points of $\dot f_n$, we get that $\pi\dot f_n=\dot f_n$. Therefore \[p'\cup\pi p'\forces^\HS\dot F(\dot f_n)=\dot f_m\neq\pi\dot f_m=\dot F(\dot f_n).\] This is impossible, of course, and therefore no such $p'$ and $\dot f_n$ exist. In other words, $\dot F$ cannot possibly be a surjection.
\end{proof}
This completes the proof that $\cK_1$ satisfies the wanted properties. If we assume that $\kappa$ is some uncountable cardinal, such that there is a universal $\kappa$-dense linear ordering, i.e.\ $\eta_\kappa$, then by replacing $\omega_1\times\QQ$ with $\kappa^+\times\eta_\kappa$ the proof translates in a straightforward way, and since we can now use $\fix(E)$ for $|E|<\kappa^+$, rather than countable, we obtain $\DC_\kappa$ rather than just $\DC$.

We therefore have the following theorem.
\begin{theorem}
Assume that $V\models\ZFC+\GCH$. Let $\kappa$ be any infinite cardinal, then there is a cofinality-preserving symmetric extension $\cK_1(\kappa)$ in which the following statements hold:
\begin{enumerate}
\item $\ZF+\DC_\kappa+\AC_\WO$,
\item There is an $\subseteq$-increasing sequence of sets $A_n\subseteq\RR$ such that for $n>1$, $A_n$ is uncountable, $|A_n|<^*|A_m|$ for $n<m$, and $|A_n\times A_m|=|A_{nm}|$.
\end{enumerate}
\end{theorem}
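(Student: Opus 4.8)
The plan is to run the $\cK_1$ construction verbatim, but with $\omega_1\times\QQ$ replaced by $\kappa^+\times\eta_\kappa$, where $\eta_\kappa$ is a universal $\kappa$-dense linear order (which exists under $\GCH$). Concretely, I would force with $\PP=\Add(\omega,\kappa^+\times\eta_\kappa)$, take $\sG$ to be the group of order automorphisms of $\kappa^+\times\eta_\kappa$ under the lexicographic order acting naturally on $\PP$, and let $\sF$ be the filter of subgroups generated by $\{\fix(E)\mid E\in[\kappa^+\times\eta_\kappa]^{<\kappa^+}\}$. The first task is bookkeeping: since the index set has cardinality the successor $\kappa^+$ and $\sF$ is generated by sets of size $<\kappa^+$, the filter is $\kappa^+$-complete, so by the discussion following the definition of mixability the system is mixable and admits an absolute representative; likewise the canonical name $\dot X$ for the added reals is injective and densely measurable by the definition of the action.

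With the system in place, part (1) is nearly immediate. $\ZF$ holds in every symmetric extension. Since $\PP$ is c.c.c.\ it is in particular $\kappa^+$-c.c., and $\sF$ is $\kappa^+$-complete, so the result of \cite{Karagila:2018c} gives $\DC_{<\kappa^+}$, hence $\DC_\kappa$. Because the system is mixable with an absolute representative, \autoref{cor:ACWO} yields $\AC_\WO$. Cofinality preservation is also clear: $\PP$ is c.c.c., so it preserves all cofinalities in $V[G]$, and since $V\subseteq\cK_1(\kappa)\subseteq V[G]$ the same holds in the symmetric extension.

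For part (2) I would define $A_n$ exactly as before, as the set of based functions $f\colon X\to n$, where now $X$ is the generic copy of $\kappa^+\times\eta_\kappa$. Each such function is still determined by a finite set of base points and their values, so $A_n$ codes as a set of reals, the sequence is $\subseteq$-increasing, $A_0=\varnothing$, $A_1$ is a singleton, and $A_2$ is a copy of $X$, which is uncountable (the argument that proper initial segments of $X$ are small while $X$ is uncountable transfers verbatim). The multiplicative identity $|A_n\times A_m|=|A_{nm}|$ is purely combinatorial: the map $\tup{f_n,f_m}\mapsto(x\mapsto m\cdot f_n(x)+f_m(x))$ is the same bijection as in the $\omega_1$ case and uses nothing about the order type, so its proof carries over unchanged.

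The one place that genuinely needs rechecking---and the step I expect to be the main obstacle---is the strict inequality $|A_n|<^*|A_m|$ for $n<m$, whose proof rested on the homogeneity of $\QQ$. The argument runs as before: given $p\forces^\HS\dot F\colon\dot A_n\to\dot A_m$ with support an initial segment $E$, one picks $\dot f_m$ with $m$ base points lying above $E$, and if $p'\forces^\HS\dot F(\dot f_n)=\dot f_m$, one moves a base point $\dot x_{\alpha,q}$ of $\dot f_m$ that is not a base point of $\dot f_n$ by an automorphism $\pi\in\fix(E)$ supported in a small interval, deriving a contradiction from $p'\cup\pi p'\forces^\HS\dot F(\dot f_n)=\dot f_m\neq\pi\dot f_m=\dot F(\dot f_n)$. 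The subtlety is that $E$ now has size up to $\kappa$ rather than being countable, so one must check that a suitable $\pi$ still exists. This is exactly where the structure of $\kappa^+\times\eta_\kappa$ is used: an initial segment of size $<\kappa^+$ sits below some level $\gamma<\kappa^+$, so the base points of $\dot f_m$ can be placed inside a single copy $\{\gamma'\}\times\eta_\kappa$ with $\gamma'>\gamma$, which is disjoint from $E$. Inside that copy $\eta_\kappa$ is $\kappa$-homogeneous and every interval is again $\kappa$-dense, while the condition $p'$ has only finite support; hence one can find an order automorphism moving $\dot x_{\alpha,q}$ within a small interval, fixing the finitely many base points of $\dot f_n$ and the finitely many coordinates of $p'$ in that interval, and fixing $E$ pointwise. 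With this $\pi$ the contradiction goes through exactly as in the $\omega_1$ case, completing the proof.
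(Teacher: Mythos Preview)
Your proposal is correct and follows essentially the same approach as the paper: after establishing the $\omega_1$ case in detail, the paper simply remarks that replacing $\omega_1\times\QQ$ by $\kappa^+\times\eta_\kappa$ (using the existence of a universal $\kappa$-dense linear order under $\GCH$) and taking supports of size $<\kappa^+$ makes the argument go through verbatim and upgrades $\DC$ to $\DC_\kappa$. You have spelled out the verification of the $<^*$ step in somewhat more detail than the paper does, but the construction, the choice of automorphism group and filter, and the homogeneity argument are identical.
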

We remark that in realizability models $\gimel 2$ is a Boolean algebra. We suspect that replacing $A_2$ with something that looks like the interval algebra of $X$ (with its special linear order) might be a way to simulate this Boolean algebra here, and then literally defining $A_n$ as the Boolean ultrapower of $n$ by this Boolean algebra. The $\gimel$ function is confusing enough, but we encourage others who are interested in these ideas to pursue a closer investigation of these approaches for the $\gimel$ function.
\subsection{Model II: Boolean algebras with products}
In \cite{Krivine:RIII} two models are presented. The first model is used to prove the following theorem.
\begin{Kthm}[Krivine, Theorem~34 in \cite{Krivine:RIII}]
It is consistent with $\ZF+\DC$ that there is an embedding, $i\mapsto A_i$, of the countable atomless Boolean algebra, $\cB$, into $\mathcal P(\RR)$ satisfying the following properties:
\begin{enumerate}
\item $A_0=\{0\}$, $|\RR|\leq^*|A_1|$, and $A_i$ is uncountable for all $i\neq 0$,
\item $A_{i\land j}=A_i\cap A_j$,
\item $|A_{i\lor j}|=|A_i\times A_j|$, in particular $|A_i|=|A_i\times A_i|$, and
\item $|A_i|\leq^*|\bigcup_{j\in J}A_j|$ if and only if $i\leq j$ for some $j\in J$, for any $J\subseteq\cB$.
\end{enumerate}
\end{Kthm}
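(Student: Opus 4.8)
The plan is to follow the template of $\cK_1$, replacing the finite ordered codomain $\set{0,\dots,n-1}$ of the based functions by the countable atomless Boolean algebra itself, and to exploit the self-similarity of that algebra to force the idempotency $|A_i|=|A_i\times A_i|$ required by~(3). I would keep the very same symmetric system: $\PP=\Add(\omega,\omega_1\times\QQ)$, with $\sG$ the group of $<_{\mathrm{lex}}$-order automorphisms of $\omega_1\times\QQ$ and $\sF$ generated by $\set{\fix(E)\mid E\in[\omega_1\times\QQ]^{<\omega_1}}$. As in $\cK_1$ this is mixable and admits an absolute representative, so \autoref{cor:ACWO} gives $\ZF+\DC+\AC_\WO$ in $\cK_2=\HS^G$, and $X$ denotes the generic set of Cohen reals with its externally-$(\omega_1\times\QQ)$ order $\prec$. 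Fix in $V$ a presentation of $\cB$ together with its relative algebras $\cB\restriction i=\set{b\in\cB\mid b\le i}$; for $i\ne 0$ each is again countable atomless, so $\cB\restriction i\cong(\cB\restriction i)\times(\cB\restriction i)$, and for disjoint $i,j$ relative complementation gives $\cB\restriction(i\lor j)\cong(\cB\restriction i)\times(\cB\restriction j)$.

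I would then let $A_i$ be the set of reals coding the $\prec$-based functions from $X$ into $\cB\restriction i$ (weakly decreasing with finitely many values, the Boolean-valued analogue of the based functions of $\cK_1$), using names $\dot f_F$ built exactly as there so that $\pi\dot f_F=\dot f_{F\circ\pi^{-1}}$ and $\fix(\dot A_i)=\sG$. Property~(2) is then built in: a function takes its values in $\cB\restriction i\cap\cB\restriction j=\cB\restriction(i\land j)$ precisely when it lies in both $A_i$ and $A_j$, so $A_{i\land j}=A_i\cap A_j$; moreover $i\le j$ gives $A_i\subseteq A_j$, and if $i\not\le j$ a function dropping to some $b\le i$ with $b\not\le j$ witnesses $A_i\not\subseteq A_j$, so $i\mapsto A_i$ is injective. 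Each $A_i$ with $i\ne 0$ is uncountable because $\cB\restriction i$ is nontrivial and $\omega_1$ is preserved, while $A_0=\set{0}$ by convention.

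For~(3) I would give an explicit $\cK_2$-definable bijection. For disjoint $i,j$ the product decomposition of $\cB\restriction(i\lor j)$ splits a based function into its relative components $x\mapsto f(x)\land i$ and $x\mapsto f(x)\land j$, exactly as $(f_n,f_m)\mapsto f_{nm}$ worked in $\cK_1$, giving $A_{i\lor j}\cong A_i\times A_j$; self-similarity gives $A_i\cong A_i\times A_i$; composing these along the disjoint refinement $i\lor j=(i\land\lnot j)\lor(i\land j)\lor(\lnot i\land j)$ and absorbing the repeated middle factor yields $|A_{i\lor j}|=|A_i\times A_j|$ for all $i,j$. The ``if'' direction of~(4) is the trivial chain $A_i\subseteq A_j\subseteq\bigcup_{j\in J}A_j$. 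For its contrapositive I would argue by homogeneity as in the final theorem of $\cK_1$: assuming $i\not\le j$ for every $j\in J$ and a name $\dot F$ for a surjection $\bigcup_{j\in J}A_j\to A_i$ with support $E$, choose a target $\dot g\in A_i$ with a base point $\dot x_{\alpha,q}\notin E$ whose value $v\le i$ satisfies $v\land\lnot j\ne 0$. If $p'\forces^\HS\dot F(\dot f)=\dot g$ with $\dot f\in A_j$, then, $\dot f$ having only finitely many base points, a small $\QQ$-interval about $\dot x_{\alpha,q}$ avoids them and carries only finite information of $p'$; taking $\pi\in\fix(E)$ supported there with $\pi p'$ compatible with $p'$ gives $\pi\dot f=\dot f$ but $\pi\dot g\ne\dot g$, whence $p'\cup\pi p'\forces^\HS\dot F(\dot f)=\dot g\ne\pi\dot g=\dot F(\dot f)$, a contradiction, so $\dot F$ is not surjective.

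The main obstacle is property~(1), the demand that $|\RR|\leq^*|A_1|$. Every real of $\cK_2$ has a countable support and so genuinely depends on countably many Cohen reals, whereas a based function of the above kind records only finitely much generic information; hence no surjection $A_1\to\RR$ can exist for $A_1$ as just defined, and the finiteness that made~(4) work is exactly what defeats~(1). I would resolve this by enriching the functions to allow countably many base points arranged in a \emph{well-ordered} (hence scattered) subset of $X$, so that a surjection onto $\RR$ can read a real off the increasing $\omega$-sequence of Cohen reals occurring as its first $\omega$ base points, while an isolated base point still leaves a free $\QQ$-interval in which to run the moving argument for~(4). The delicate point, which I expect to be the real work, is to re-establish~(3) and the non-surjection argument for~(4) in the presence of infinitely many base points—verifying that the splitting bijections persist and that one can still select an isolated critical base point lying outside $E$ and off the countably many base points of the preimage—together with confirming that the enriched names remain in $\HS$ and that each $A_i$ stays a subset of $\RR$.
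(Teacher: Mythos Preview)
Your approach diverges substantially from the paper's. The paper abandons the $\cK_1$ system for this theorem and instead forces with $\Add(\omega,\omega\times\omega_1)$, taking $\sG=\prod_{n<\omega}S_{\omega_1}$ (each factor permuting one column $\{n\}\times\omega_1$) and the filter by countable supports. It then embeds all of $\power(\omega)$: for $S\subseteq\omega$,
\[A_S=\prod_{n\in\omega}\begin{cases}X_n^{\omega,V}&n\in S\\\{0\}&n\notin S,\end{cases}\]
where $X_n^{\omega,V}$ is the set of $\omega$-sequences from the $n$th block of Cohen reals indexed by ground-model functions. Idempotency is the ground-model interleaving bijection on sequences; $|\RR|\leq^*|A_\omega|$ holds because every real is the interpretation of an $\Add(\omega,\omega)$-name by some ground-model enumeration sitting inside $A_\omega$; and~(4) is a one-step symmetry argument: pick $n\in S\setminus T$ and swap two coordinates in column~$n$ outside all supports, which fixes every name in $\dot A_T$ while moving the target in $\dot A_S$. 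The Boolean structure is carried by the \emph{index set} $\omega$, so no pigeonhole on base points is ever needed.

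Your proposal pushes the Boolean structure into the \emph{values} of based functions on the linearly ordered $X$ of $\cK_1$, and this creates two problems. The first you diagnose: with finitely many base points each element of $A_1$ encodes only finitely much generic data, so $|\RR|\leq^*|A_1|$ fails; your enrichment to countable well-ordered base-point sets is, as you say, where the work would begin, and it is not carried out. The second problem you do not flag: your argument for~(4) already has a hole in the finite-base-point setting. In $\cK_1$ the engine was pigeonhole---$f_m$ has strictly more base points than any $f_n$ with $n<m$, so some base point of the target lies off the source. Here $\cB\restriction i$ and $\cB\restriction j$ are both infinite, so the preimage $f\in A_j$ can have at least as many base points as your chosen $g$, and nothing prevents your critical point $\dot x_{\alpha,q}$ from being a base point of $f$; in that case no interval about it ``avoids them'' and your $\pi$ moves $\dot f$ along with $\dot g$. (Your value condition ``$v\land\lnot j\ne 0$'' is also imposed before $j\in J$ is determined by $f$, and once $j$ is known it does not by itself separate the base-point sets.) So the tension you locate between~(1) and the homogeneity argument for~(4) is already present as a tension between the infinitude of $\cB$ and the pigeonhole step; the paper's column-wise construction sidesteps it rather than resolving it.
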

For this model we actually make things a bit easier for ourselves, and embed the entire Boolean algebra $\power(\omega)$ with the above properties of the embedding. This is indeed enough, since the countable atomless Boolean algebra has a very concrete embedding into $\power(\omega)$. We therefore revert to the set-operations on this Boolean algebra, rather than abstract Boolean notation.

We use the forcing $\PP=\Add(\omega,\omega\times\omega_1)$ with $\sG$ the group of permutations, $\pi,$ of $\omega\times\omega_1$ for which $\pi(n,\cdot)$ is a permutation of $\{n\}\times\omega_1$. In other words, the group is the full-support product $\prod_{n<\omega}S_{\omega_1}$, acting naturally on $\omega\times\omega_1$. Our filter of subgroups is given by countable supports, as before. Therefore, as above, we will have $\ZF+\DC$ in the symmetric extension, as well as $\AC_\WO$.

For $\tup{n,\alpha}\in\omega\times\omega_1$ we denote by $\dot x_{n,\alpha}$ the name for $\{\tup{p,\check k}\mid p(n,\alpha,k)=1\}$. Let $\dot X_n=\{\dot x_{n,\alpha}\mid\alpha<\omega_1\}^\bullet$. Clearly, each $\dot X_n$ is symmetric, and indeed, the sequence $\tup{\dot X_n\mid n<\omega}^\bullet\in\HS$ as well. For $f\colon\omega\to\omega\times\omega_1$, let $\dot x_f$ denote the name $\tup{\dot x_{f(n)}\mid n<\omega}^\bullet$, and let $\dot X_n^{\omega,V}=\{\dot x_f\mid f\colon\omega\to\{n\}\times\omega_1\}^\bullet$.

Let $G$ be a $V$-generic filter, and let $\cK_2$ denote the symmetric extension. As before, we omit the dots to indicate the interpretation of the names. Working in $\cK_2$ we define for $S\subseteq\omega$ the set $A_S$ as the product: \[A_S=\prod_{n\in\omega}\begin{cases}X_n^{\omega,V} & n\in S\\\{0\} & n\notin S\end{cases},\]
by coding sequences of real numbers as real numbers we can assume each $X_n^{\omega,V}$ is a set of real numbers, and by applying the coding again we can assume that $A_S\subseteq\RR$ for all $S\subseteq\omega$. We will assume that the constant sequence $0$ will be coded as the number $0$.

We claim that $S\mapsto A_S$ is the wanted embedding. We prove each property in a separate proposition.

\begin{proposition}\label{prop:idems}
$|A_S\times A_S|=|A_S|$.
\end{proposition}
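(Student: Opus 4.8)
The plan is to produce an explicit bijection $\Phi\colon A_S\times A_S\to A_S$ by ``interleaving'' sequences coordinatewise, and then to verify that a name for $\Phi$ is hereditarily symmetric, so that $\Phi\in\cK_2$. The coding of sequences of reals as reals is a fixed, ground-model (hence $\sG$-invariant) operation, so it contributes nothing to the symmetry analysis; I would run the whole argument at the level of the underlying products and only invoke the coding at the very end to place everything inside $\RR$. Note that no form of choice is needed, since the bijection will be given by a single explicit formula.

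The heart of the construction is that interleaving gives a bijection $X_n^{\omega,V}\times X_n^{\omega,V}\to X_n^{\omega,V}$ for each $n\in S$. An element of $X_n^{\omega,V}$ is of the form $x_f=\dot x_f^G$ for some $f\colon\omega\to\set n\times\omega_1$, and since $1\forces\dot x_{n,\alpha}\neq\dot x_{n',\alpha'}$ for distinct pairs, the assignment $f\mapsto x_f$ is a bijection onto $X_n^{\omega,V}$. Fixing once and for all the ground-model bijection $\omega\sqcup\omega\cong\omega$ sending $(k,0)\mapsto 2k$ and $(k,1)\mapsto 2k+1$, I would interleave a pair $(x_f,x_g)$ into the single sequence $x_h$ with $h(2k)=f(k)$ and $h(2k+1)=g(k)$; this is manifestly a bijection because interleaving is a bijection on index functions, and it never decodes a real back to its index, so it descends to a genuine operation on reals. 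Writing $B_n=X_n^{\omega,V}$ for $n\in S$ and $B_n=\set 0$ otherwise, the canonical rearrangement $\prod_n B_n\times\prod_n B_n\cong\prod_n(B_n\times B_n)$ turns $A_S\times A_S$ into $\prod_n(B_n\times B_n)$; for $n\in S$ I would apply interleaving $B_n\times B_n\cong B_n$ and for $n\notin S$ the trivial bijection $\set 0\times\set 0\cong\set 0$. This yields $A_S\times A_S\cong\prod_n B_n=A_S$.

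The main point, and the only genuine obstacle, is to check that the resulting $\Phi$ is symmetric. Here the key is that every $\pi\in\sG$ permutes only the $\omega_1$-coordinate within a fixed level $n$, whereas interleaving reindexes only the $\omega$-coordinate; the two operations act in disjoint directions and therefore commute. Explicitly, $\pi\dot x_f=\dot x_{\pi\circ f}$ where $(\pi\circ f)(k)=\pi(f(k))$, and one checks directly that $\pi\circ h$ is the interleaving of $\pi\circ f$ and $\pi\circ g$, so that $\Phi(\pi a,\pi b)=\pi\Phi(a,b)$ for all $a,b$. Consequently a natural name $\dot\Phi$ for the interleaving map satisfies $\pi\dot\Phi=\dot\Phi$ for every $\pi\in\sG$, whence $\sym(\dot\Phi)=\sG\in\sF$ and $\dot\Phi\in\HS$. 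Since all the ingredient names (the $\dot x_f$, the fixed coding, and the product rearrangement) are themselves hereditarily symmetric and invariant under $\sG$, the map $\Phi$ is a bijection in $\cK_2$, giving $|A_S\times A_S|=|A_S|$.
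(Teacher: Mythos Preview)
Your argument is correct and follows essentially the same route as the paper: reduce to $|X_n^{\omega,V}\times X_n^{\omega,V}|=|X_n^{\omega,V}|$ via the interleaving map $h(2k)=f(k)$, $h(2k+1)=g(k)$, then apply this uniformly across the factors of the product. The only difference is presentational: you spell out the $\sG$-equivariance of interleaving (noting that $\pi$ acts on the $\omega_1$-coordinate while interleaving reindexes the $\omega$-coordinate), whereas the paper compresses this verification into the single phrase ``lifted from the ground model.''
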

\begin{proof}
It is enough to prove that $|X_n^{\omega,V}\times X_n^{\omega,V}|=|X_n^{\omega,V}|$. If we do that, then by choosing a bijection for each $n\in S$ we get the wanted result. But this is trivial, as the interleaving function, mapping $\tup{f,g}$ to $h$ such that $h(2n)=f(n)$ and $h(2n+1)=g(n)$ for all $n<\omega$, is such a bijection, lifted from the ground model.
\end{proof}
\begin{corollary}
$|A_S\times A_T|=|A_{S\cup T}|$.
\end{corollary}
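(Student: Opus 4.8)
The plan is to exploit the product structure of the $A_S$: up to the fixed coding of sequences of reals as reals, $A_S$ is just $\prod_{n\in S}X_n^{\omega,V}$ with a trivial $\{0\}$ factor in every coordinate outside $S$. Consequently, forming $A_S\times A_T$ simply lists the factor $X_n^{\omega,V}$ once for each $n\in S\sdiff T$ and twice for each $n\in S\cap T$. The governing idea is therefore that the overlap $S\cap T$ is exactly the source of the ``extra'' copies, and that \autoref{prop:idems} is precisely the tool needed to absorb them.

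First I would make the rearrangement explicit by producing a canonical bijection
\[
A_S\times A_T\;\cong\;A_{S\cup T}\times A_{S\cap T}.
\]
Given $(a,b)$ with $a\in A_S$ and $b\in A_T$, I send it to the pair $(c,d)$ where, coordinatewise, $c_n=a_n$ and $d_n=0$ for $n\in S\setminus T$, $c_n=b_n$ and $d_n=0$ for $n\in T\setminus S$, and $c_n=a_n$, $d_n=b_n$ for $n\in S\cap T$ (with everything $0$ off $S\cup T$). This is visibly a bijection, and it is defined by a single uniform rule on each coordinate, so it requires no choice.

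Then I would absorb the surplus factor. Since $S\cap T\subseteq S\cup T$ gives the inclusion $A_{S\cap T}\subseteq A_{S\cup T}$, and since $A_{S\cap T}$ always contains the constant-$0$ sequence, we get
\[
|A_{S\cup T}|\le|A_{S\cup T}\times A_{S\cap T}|\le|A_{S\cup T}\times A_{S\cup T}|=|A_{S\cup T}|,
\]
where the last equality is \autoref{prop:idems}. Cantor--Schr\"oder--Bernstein (available in $\ZF$) then yields $|A_{S\cup T}\times A_{S\cap T}|=|A_{S\cup T}|$, and combined with the bijection above this is exactly $|A_S\times A_T|=|A_{S\cup T}|$.

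The point to keep honest, rather than a genuine obstacle, is that every map used must live in $\cK_2$ and not merely in $V[G]$: the coordinatewise rearrangement, the inclusion into the square, and the constant-$0$ section are all definable by uniform formulas from the fixed coding and from the interleaving bijection of \autoref{prop:idems} (itself lifted from the ground model), so no appeal to a choice principle is hidden in ``choosing'' the bijections. I would note in passing that one could instead give a single explicit componentwise bijection $A_S\times A_T\to A_{S\cup T}$ by pairing the two overlapping copies with the interleaving map, bypassing Cantor--Schr\"oder--Bernstein altogether; the sandwich argument is merely shorter to write down.
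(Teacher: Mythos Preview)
Your proof is correct and follows essentially the same route as the paper: decompose $A_S\times A_T$ along the partition $S\setminus T,\ T\setminus S,\ S\cap T$ and then use \autoref{prop:idems} to absorb the duplicated $A_{S\cap T}$ factor. The only cosmetic difference is that the paper collapses $A_{S\cap T}\times A_{S\cap T}$ directly via the interleaving bijection and then reassembles the three disjoint pieces into $A_{S\cup T}$, whereas you pass through the sandwich $|A_{S\cup T}|\le|A_{S\cup T}\times A_{S\cap T}|\le|A_{S\cup T}\times A_{S\cup T}|$ and invoke Cantor--Schr\"oder--Bernstein; as you yourself note, the explicit route avoids CSB and is what the paper does.
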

\begin{proof}
If $S\cap T=\varnothing$ this is trivial. In the general case, note that $A_S\times A_T$ is naturally isomorphic to $(A_{S\setminus T}\times A_{S\cap T})\times (A_{T\setminus S}\times A_{S\cap T})$. Since $A_{S\cap T}\times A_{S\cap T}$ is the same cardinality as $A_{S\cap T}$ the result follows.
\end{proof}

\begin{proposition}
$A_\varnothing=\{0\}$, $|\RR|\leq^*|A_\omega|$, and for all $S\neq\varnothing$, $A_S$ is uncountable.
\end{proposition}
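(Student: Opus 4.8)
My plan is to treat the three assertions separately, since they differ sharply in difficulty. The identity $A_\varnothing=\{0\}$ is immediate: when $S=\varnothing$ every factor of the product is $\{0\}$, so $A_\varnothing$ is the single constant sequence, which by the stated coding convention is the number $0$. For the uncountability of $A_S$ when $S\neq\varnothing$, I would fix some $n\in S$ and observe that $A_S$ contains a copy of $X_n^{\omega,V}$, obtained by freezing the remaining coordinates at a definable point (e.g. the constant sequence $\tup{\dot x_{m,0}}^\bullet$ for $m\in S$, $m\neq n$, and $0$ for $m\notin S$, so that no choice is needed). It therefore suffices to see that $X_n^{\omega,V}$ is uncountable in $\cK_2$. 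But in $V[G]$ the $\aleph_1$ constant sequences $\tup{x_{n,\alpha},x_{n,\alpha},\dots}$ for $\alpha<\omega_1$ are distinct elements of $X_n^{\omega,V}$, so a bijection with $\omega$ inside $\cK_2$ would, being an element of $V[G]$ as well, make a set of size $\aleph_1$ countable in $V[G]$; since $\omega_1$ is preserved this is impossible.

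The substantial claim is $|\RR|\leq^*|A_\omega|$, i.e. the construction of a surjection from $A_\omega$ onto $\RR^{\cK_2}$. The first ingredient is the standard support lemma: if $\dot r\in\HS$ names a real and $\fix(E)\subseteq\sym(\dot r)$ for a countable $E\in V$, then $\dot r^G\in V[G\restriction E]$. I would prove this by the usual homogeneity argument: if $p$ decides $\check n\in\dot r$ but $p\restriction E$ does not, pick $q\leq p\restriction E$ forcing the opposite and a $\pi\in\fix(E)$ carrying the part of $q$ outside $E$ to fresh coordinates, so that $\pi q$ is compatible with $p$ yet still forces the opposite, a contradiction. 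Consequently every real of $\cK_2$ is of the form $\sigma^{G\restriction E}$ for some countable $E\in V$ and some nice $\Add(\omega,E)$-name $\sigma\in V$; transporting along a ground-model enumeration $E\cong\omega$ we may regard $\sigma$ as coded by a real $\rho_\sigma\in\RR^V$.

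The point I expect to be the crux is that $A_\omega$ can be made to \emph{supply} this ground-model name. The key observation is that although the ordinal indices of the Cohen reals appearing in an element of $A_\omega$ are invisible in $\cK_2$, their equality pattern is not: for $a=\tup{s_n\mid n<\omega}$ the relation $k\sim k'\iff (s_0)_k=(s_0)_{k'}$ is exactly the kernel of the ground-model function defining $s_0$, hence an element of $\RR^V$, and it is computable from $a$ inside $\cK_2$. Fixing in $V$ a bijection $b$ between $\RR^V$ and the equivalence relations on $\omega$, together with a $V$-coding of reals as nice names, the assignment $a\mapsto b^{-1}(\text{kernel of }s_0)$ surjects $A_\omega$ onto the name-codes. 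I would then define the desired map as follows: using the idempotency $|A_\omega\times A_\omega|=|A_\omega|$ from \autoref{prop:idems} (applied blockwise), split $a$ into $(a',a'')$; read a name $\rho\in\RR^V$ off the block-$0$ equality pattern of $a'$, read off from $a''$ the sequence of its Cohen reals as a single real $c$ coding a $V$-generic $g$ for $\Add(\omega,\omega)$, and output the interpretation $\rho^{g}$ (with a default value on ill-formed inputs). This is readily checked to be a hereditarily symmetric function, so it lies in $\cK_2$, and its value is always a real there. For a prescribed target $r=\sigma^{G\restriction E}$ I can hit $r$ by choosing $a''$ whose coordinates enumerate $\{x_e\mid e\in E\}$ (possible since $E\in V$, so the enumerating functions lie in $V$) and $a'$ whose block-$0$ kernel equals $b(\rho_\sigma)$; then $\rho=\rho_\sigma$ and $g=G\restriction E$, so the output is $r$.

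The main obstacle, then, is not the bookkeeping but the realization that the name must be fed in from the ground model and that the equality pattern is the one piece of ground-model data an element of $A_\omega$ exposes; everything else — the map $a\mapsto c$, the evaluation $\rho^{g}$, and the blockwise idempotency — is routine. A secondary point to be careful about is keeping the name-encoding and the generic-supplying disjoint, which is precisely what the splitting $a=(a',a'')$ via $|A_\omega\times A_\omega|=|A_\omega|$ achieves, so that the block-$0$ coordinates used for the pattern do not interfere with the coordinates of $E$ that must be reproduced in $a''$.
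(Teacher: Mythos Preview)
Your argument is correct and follows the same architecture as the paper: a support lemma localizes every real of $\cK_2$ to some $V[G\restriction E]$ with $E\in V$ countable, idempotency is used to separate a ``name'' component from a ``generic'' component, and the surjection is evaluation of the former at the latter. The first and third clauses are handled just as in the paper (which simply declares them immediate from the definition).

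The one genuine difference is how you extract a copy of $\fc^V$ from $A_\omega$. The paper does this by an \emph{injection}: the ground-model enumerations of the fixed countable set $\{x_{0,n}\mid n<\omega\}$ already give $\fc^V$ distinct elements of $X_0^{\omega,V}\subseteq A_\omega$, whence $|A_\omega|=|A_\omega\times\fc^V|$ by Cantor--Bernstein and \autoref{prop:idems}, and one may index the nice $\Add(\omega,\omega)$-names directly by that second factor. You instead build a \emph{surjection} $A_\omega\to\fc^V$ by reading off the equality pattern (kernel) of the block-$0$ sequence and composing with a $V$-bijection between equivalence relations on $\omega$ and $\RR^V$. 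Your route is sound---every equivalence relation on $\omega$ is the kernel of some $f\colon\omega\to\omega_1$, and there are exactly $\fc^V$ of them---but it is a detour: the paper's injection is more direct and avoids checking those two facts. On the other hand, your write-up is more explicit about the support lemma and the symmetry of the resulting map, which the paper leaves entirely to the reader. Both proofs are equally sketchy about the bookkeeping needed to match the enumeration of $E$ used to transport $\sigma$ with the enumeration used to build $a''$ (respectively $x$); this is harmless since everything involved lives in $V$.
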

\begin{proof}
The first and third part are immediate from the definition of $A_S$. The fact that $|\RR|\leq^*|A_\omega|$ follows from the fact that every real number is the interpretation of an $\Add(\omega,\omega)$-name using a sequence in reals coded by an element of $A_\omega$.

In the ground model there are only $\fc^V$ names for reals, so it is enough to choose one countable set, e.g. $\{x_{0,n}\mid n<\omega\}$. The collection of sequences in $A_S$ which enumerate this specific set has cardinality $\fc^V$, as those are all enumerations from the ground model. This, together with \autoref{prop:idems} implies that $|A_\omega\times\fc^V|=|A_\omega|$. Enumerate the nice names\footnote{For any reasonable definition of ``nice name''.} of reals in $\Add(\omega,\omega)$ from $V$, then map the pair $(x,\alpha)$ to the interpretation of the $\alpha$th name by the generic coded by $x$.
\end{proof}

\begin{proposition}
$A_S\cap A_T=A_{S\cap T}$.\qed
\end{proposition}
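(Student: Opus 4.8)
The plan is to reduce the identity to an entrywise computation on the product structure. Unfolding the definition, an element of $A_S$ is (the code of) an $\omega$-sequence $a=\langle a_n\mid n<\omega\rangle$ whose $n$th entry lies in $X_n^{\omega,V}$ when $n\in S$ and equals $0$ when $n\notin S$, and likewise for $A_T$ and $A_{S\cap T}$. Since the coding of sequences of reals as reals is injective, set operations on the coded sets faithfully reflect the corresponding operations on sequences, so it suffices to argue at the level of sequences. Writing $F_n^S$ for the $n$th factor of $A_S$ (so $F_n^S=X_n^{\omega,V}$ if $n\in S$ and $F_n^S=\{0\}$ otherwise), I would first record the elementary fact that for products of this indexed form the intersection of two products is the product of the coordinatewise intersections, i.e.\ $A_S\cap A_T=\prod_n(F_n^S\cap F_n^T)$, so that it remains only to identify each factor with $F_n^{S\cap T}$.

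I would then split into the three cases according to the position of $n$ relative to $S$ and $T$. If $n\in S\cap T$, both factors equal $X_n^{\omega,V}$, so their intersection is $X_n^{\omega,V}=F_n^{S\cap T}$; if $n\notin S\cup T$, both factors equal $\{0\}$, so their intersection is $\{0\}=F_n^{S\cap T}$. From these two cases the inclusion $A_S\cap A_T\subseteq A_{S\cap T}$ already follows, since on any coordinate $n\in S\sdiff T$ at least one of the two defining conditions forces the entry to be $0$, which is exactly the condition imposed by $A_{S\cap T}$ there.

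The one point that genuinely needs care, and which I expect to be the only real obstacle, is the reverse inclusion on the symmetric-difference coordinates $n\in S\sdiff T$. There one factor is $X_n^{\omega,V}$ and the other is $\{0\}$, so the coordinatewise intersection is $X_n^{\omega,V}\cap\{0\}$, and for this to collapse to $\{0\}$ — and hence for $A_{S\cap T}$ to be contained in $A_S\cap A_T$ — one needs $0$ to be a common element, i.e.\ $0\in X_n^{\omega,V}$. This is precisely where I would invoke the convention fixed in the construction that the constant sequence $0$ is coded by the real $0$, so that $0$ is a shared point of the relevant factors; with that convention $X_n^{\omega,V}\cap\{0\}=\{0\}=F_n^{S\cap T}$, and the coordinatewise identification is complete, giving $A_S\cap A_T=\prod_n F_n^{S\cap T}=A_{S\cap T}$.
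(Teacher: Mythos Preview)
The paper offers no proof here beyond the \qed; your coordinatewise analysis is far more careful and correctly isolates the only delicate point, namely that the inclusion $A_{S\cap T}\subseteq A_S\cap A_T$ needs $0\in X_n^{\omega,V}$ on each coordinate $n\in S\sdiff T$.

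Your resolution of that point, however, does not work. The convention you invoke says only that the constant-$0$ sequence is coded by the real $0$; it does not put $0$ into $X_n^{\omega,V}$. An element of $X_n^{\omega,V}$ is (the code of) a sequence $\langle x_{n,f(k)}\mid k<\omega\rangle$ whose entries are Cohen reals, and a Cohen real is never the ground-model real $0$, so no such sequence is the constant-$0$ sequence and hence $0\notin X_n^{\omega,V}$. With the definitions exactly as the paper writes them, on any coordinate $n\in S\sdiff T$ the two factors $X_n^{\omega,V}$ and $\{0\}$ are disjoint, so $A_S\cap A_T=\varnothing\neq A_{S\cap T}$ whenever $S\neq T$. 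In other words, you have uncovered a small oversight in the paper's setup rather than committed an error of your own: the trivial fix is to use $X_n^{\omega,V}\cup\{0\}$ as the $n$th factor for $n\in S$, after which your coordinatewise argument goes through verbatim and the \qed\ is justified.
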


\begin{proposition}
$|A_S|\leq^*|\bigcup_{T\in\cJ}A_T|$ if and only if $S\subseteq T\in\cJ$.
\end{proposition}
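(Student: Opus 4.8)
The plan is to prove the two implications separately. For the direction $(\Leftarrow)$, suppose $S\subseteq T$ for some $T\in\cJ$. The preceding proposition gives $A_S\cap A_T=A_{S\cap T}=A_S$, so $A_S\subseteq A_T\subseteq\bigcup_{T'\in\cJ}A_{T'}$; since $A_S\neq\varnothing$, this inclusion yields a surjection from the union onto $A_S$, i.e.\ $|A_S|\leq^*|\bigcup_{T'\in\cJ}A_{T'}|$. For $(\Rightarrow)$ I argue contrapositively: assuming $S\setminus T\neq\varnothing$ for every $T\in\cJ$, I will show that no $\dot g\in\HS$ can be forced to surject $\bigcup_{T\in\cJ}A_T$ onto $A_S$.

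So fix $p$ and $\dot g\in\HS$ with $p\forces^\HS$``$\dot g\colon\bigcup_{T\in\cJ}A_T\to A_S$ is surjective'', and choose a countable $E\subseteq\omega\times\omega_1$ with $\supp(p)\subseteq E$ and $\fix(E)\subseteq\sym(\dot g)$. The target is a single $\dot a\in A_S$ lying outside the range. I take $\dot a$ to be \emph{fresh}: for each $n\in S$ the block-$n$ entry of $\dot a$ is a sequence of Cohen reals $\dot x_{n,\gamma}$ whose indices $\gamma$ all lie outside $E$. Passing to a generic $G$ with $p\in G$ and writing $g,a$ for the interpretations, surjectivity gives $b\in\bigcup_{T\in\cJ}A_T$ with $g(b)=a$; fix $T\in\cJ$ with $b\in A_T$ and some $n_0\in S\setminus T$.

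The crucial step is to re-present the preimage by a name that ignores block $n_0$. Although the name originally witnessing $b$ may consult coordinates in block $n_0$, the value $b\in A_T$ is built solely from Cohen reals in the blocks indexed by $T$: in $V[G]$ we have $b(n)=\dot x_{f_n}^G$ for ground-model $f_n\colon\omega\to\{n\}\times\omega_1$ when $n\in T$, and $b(n)=0$ otherwise. Let $\dot b^\ast$ be the canonical name for $b$ assembled from these $\dot x_{f_n}$ (with $0$ in the coordinates $n\notin T$); it lies in $\HS$, has support inside $\bigcup_{n\in T}(\{n\}\times\omega_1)$, and in particular avoids block $n_0$. Since $\dot b^\ast$ and the original preimage name share the value $b$ in $G$, some $q\in G$ has $q\forces^\HS\dot g(\dot b^\ast)=\dot a$. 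Now pick an index $\gamma\notin E$ that $\dot a$ genuinely uses in block $n_0$, pick a fresh index $\gamma'$ in block $n_0$ with $(n_0,\gamma')\notin E\cup\supp(q)$, distinct from $\gamma$ and unused by $\dot a$, and let $\pi$ transpose $(n_0,\gamma)$ and $(n_0,\gamma')$. Then $\pi\in\fix(E)$ fixes $\dot g$; $\pi$ fixes $\dot b^\ast$ because the latter avoids block $n_0$; $\pi q$ is compatible with $q$ since $\gamma'$ is fresh; and $\pi\dot a\neq\dot a$ is forced, as distinct Cohen reals are forced unequal. The symmetry lemma turns $q\forces^\HS\dot g(\dot b^\ast)=\dot a$ into $\pi q\forces^\HS\dot g(\dot b^\ast)=\pi\dot a$, so the common extension $q\cup\pi q$ forces both $\dot g(\dot b^\ast)=\dot a$ and $\dot g(\dot b^\ast)=\pi\dot a$, hence $\dot a=\pi\dot a$ --- contradicting $\pi\dot a\neq\dot a$. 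Thus $a\notin\rng(g)$, so $g$ is not surjective.

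The main obstacle is exactly this recanonicalisation. A direct attempt to wiggle a block-$n_0$ coordinate of $\dot a$ can be defeated because the preimage name may itself depend on that coordinate: its value is invariant under permutations of block $n_0$, but the name need not be, and (unlike the finitely supported ``based functions'' of Model I) the full value cannot be decided by a single condition. The resolution is the observation that every member of $A_T$ is already coded by the $T$-blocks alone, so its value admits a name with support disjoint from block $n_0$; this is what frees the transposition to move $\dot a$ without disturbing the preimage. Everything else is the standard homogeneity of $\Add(\omega,\omega\times\omega_1)$. For a general index set $\cJ$ no extra work is needed: the fresh $\dot a$ was arranged to use an index outside $E$ in \emph{every} block $n\in S$, so whichever $T\in\cJ$ and $n_0\in S\setminus T$ arise from the particular preimage, the argument above applies verbatim.
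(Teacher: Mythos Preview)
Your argument is correct and follows essentially the same route as the paper: pick a ``fresh'' target $\dot a\in A_S$ whose block-$n$ coordinates avoid the support $E$, find a canonical preimage name supported only in the $T$-blocks, and apply a transposition in some block $n_0\in S\setminus T$ to derive a contradiction. The paper phrases the recanonicalisation step more tersely (it simply takes $\dot b$ to be a $\bullet$-name \emph{appearing in} $\dot A_T$, which already guarantees its support lies in the $T$-blocks), and arranges $\pi q=q$ rather than mere compatibility, but these are cosmetic differences.

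One small point: your final paragraph asserts that a general $\cJ$ needs no extra work, but you do not address the case $S\notin V$ (nor $T\notin V$). When $S$ is not in the ground model you cannot literally write down the fresh $\dot a$ as a $\bullet$-name indexed by $n\in S$, and when $T\notin V$ your $\dot b^\ast$ as described is not a $V$-name either. The paper handles this by enlarging $E$ to support the names $\dot S,\dot\cJ$ and by extending $q$ to decide a specific $n_0\in\dot S\setminus\dot T$ together with the relevant $f_{n_0}$; the permutation argument then goes through with ``$\pi\dot b^\ast=\dot b^\ast$'' weakened to ``$q$ forces $\pi\dot b^\ast=\dot b^\ast$'' (since $q$ decides $n_0\notin T$, the $n_0$-coordinate of $\dot b^\ast$ is forced to be $0$ regardless). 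This is a routine patch, but it is worth flagging that your semantic passage to $V[G]$ does not by itself eliminate the need for it.
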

\begin{proof}
It is clear that if $S\subseteq T\in\cJ$ then $|A_S|\leq^*|\bigcup_{T\in\cJ}A_T|$. We will show that if $S\nsubseteq T$ for all $T\in\cJ$, then this is not the case. We start with the case where $S,\cJ\in V$, as it simplifies the proof.

An element of $A_S$ is a sequence of sequences which are ``kind of coded by ground model reals''. As such, it has a fairly canonical name given by $\tup{\dot x_{f_n}\mid n\in S}^\bullet$, where $f_n\colon\omega\to\{n\}\times\omega_1$. This provides us with a $\bullet$-name for $A_S$, which we will denote by $\dot A_S$.\footnote{We tacitly ignore the sequences outside of $S$ which are constant $0$ anyway.}

Suppose that $\dot F\in\HS$ and $p\forces^\HS\dot F\colon\bigcup_{T\in\cJ}\dot A_T\to\dot A_S$. Let $E$ be a countable set such that $\fix(E)\subseteq\sym(\dot F)$ and $\supp(p)\subseteq E$. For $n\in S$ let $f_n$ be some function $f_n\colon\omega\to\{n\}\times\omega_1$ such that $\rng(f_n)\cap E=\varnothing$ for all $n$. Let $\dot a$ be the name $\tup{\dot f_n\mid n\in S}^\bullet$ which is a name for an element of $A_S$.

If $p\forces\dot a\notin\rng(\dot F)$, then $\dot F$ is not surjective. Otherwise, we may assume $p\forces\dot F$ is surjective, and we can extend $p$ to some $q$ such that there is some $\dot b$ which is a $\bullet$-name appearing in  $\dot A_T$, for some $T\in\cJ$, and $q\forces\dot F(\dot b)=\dot a$.

As the usual argument goes now, pick some $n\in S\setminus T$, and some $\alpha<\beta<\omega_1$ for which the following hold:
\begin{enumerate}
\item $f_n(m)=\alpha$ for some $m$,
\item $\tup{n,\alpha},\tup{n,\beta}\notin\supp(q)$.
\end{enumerate}
Then the permutation $\pi$ which acts only on the $n$th copy of $\omega_1$ and switches $\alpha$ with $\beta$ satisfies that:
\begin{enumerate}
\item $\pi\in\fix(E)$, and therefore $\pi\dot F=\dot F$ and $\pi p=p$.
\item $\pi\dot A_T=\dot A_T$, and in particular $\pi\dot b=\dot b$.
\item $\pi\dot x_{f_n}\neq\dot x_{f_n}$, and therefore $\pi\dot a\neq\dot a$.
\item And most importantly, $\pi q=q$.
\end{enumerate}
Therefore $\pi q=q\forces\pi\dot F(\pi\dot b)=\pi\dot a\neq\dot a=\dot F(\dot b)=\pi\dot F(\pi\dot b)$. This is of course impossible.

When dealing with $S$ or $\cJ$ which are not in $V$ we need to extend our conditions and $E$ to also preserve the relevant names for these sets, and extend $q$ to decide at least one natural number such that $n\in S\setminus T$, and the function $f_n$ (which is still in the ground model even when $S$ is not). As the conditions in the Cohen forcing are all finitary, this does not change the core of the above argument.
\end{proof}

This completes the proof of a slightly more general theorem than Krivine's, as we embed an even larger Boolean algebra. We observe that $\omega_1$ can be replaced by any $\kappa^+$ to preserve $\DC_\kappa$ as well, just as before.

\begin{theorem}
Assume that $V\models\ZFC+\GCH$. Let $\kappa$ be an infinite cardinal, then there is a cofinality-preserving symmetric extension $\cK_2(\kappa)$ in which the following statements hold:
\begin{enumerate}
\item $\ZF+\DC_\kappa+\AC_\WO$,
\item There is an embedding, $S\mapsto A_S$, from $\power(\omega)$ into $\power(\RR)$ with the properties that:
\begin{enumerate}
\item $A_\varnothing=\{0\}$, $A_S$ is uncountable for all $S\neq\varnothing$, and $|\RR|\leq^*|A_\omega|$,
\item $A_{S\cap T}=A_S\cap A_T$,
\item $|A_{S\cup T}|=|A_S\times A_T|$, in particular $|A_S\times A_S|=|A_S|$ for all $S$.
\item $|A_S|\leq^*|\bigcup_{T\in\cJ}A_T|$ if and only if $S\subseteq T$ for some $T\in\cJ$, for any $\cJ\subseteq\power(\omega)$.
\end{enumerate}
\end{enumerate}
\end{theorem}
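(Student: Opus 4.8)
The plan is to rerun the entire $\cK_2$ construction verbatim, replacing $\omega_1$ by $\kappa^+$ throughout and replacing the word ``countable'' by ``of size $\leq\kappa$'' wherever it governs supports. Concretely, I would force with $\PP=\Add(\omega,\omega\times\kappa^+)$, take $\sG=\prod_{n<\omega}S_{\kappa^+}$ acting so that each $\pi$ restricts to a permutation of every copy $\{n\}\times\kappa^+$, and let $\sF$ be generated by $\{\fix(E)\mid |E|\leq\kappa\}$. The names $\dot x_{n,\alpha}$, $\dot X_n$, $\dot x_f$, $\dot X_n^{\omega,V}$ and the sets $A_S$ are defined exactly as before, and the same coding of $\omega$-sequences of reals as reals again places every $A_S$ inside $\RR$. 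Since $\PP$ is Cohen forcing it is c.c.c., hence preserves all cardinals and cofinalities from $V$ to $V[G]$; as $\cK_2(\kappa)$ is an inner model $V\subseteq\cK_2(\kappa)\subseteq V[G]$, the usual squeezing argument shows cofinalities are preserved in $\cK_2(\kappa)$ as well.

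For item (1) I would argue as follows. Intersecting $\leq\kappa$ many generators $\fix(E_i)$ yields $\fix(\bigcup_i E_i)$ with $|\bigcup_i E_i|\leq\kappa$ (using $\kappa\cdot\kappa=\kappa$), so $\sF$ is $\kappa^+$-complete; and $\PP$, being c.c.c., is in particular $\kappa^+$-c.c. The cited Lemma~3.3 of \cite{Karagila:2018c} then yields $\DC_{<\kappa^+}$, hence $\DC_\kappa$. Because $\kappa^+$ is a successor cardinal and $\sF$ is generated by supports of strictly smaller cardinality, the system is mixable and admits an absolute representative, exactly as remarked at the start of the section, so \autoref{cor:ACWO} delivers $\AC_\WO$.

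For item (2) each of the four clauses is the $\kappa^+$-analogue of a proposition already proved for $\cK_2$, and I would transcribe those proofs under the substitution above. The identity $A_\varnothing=\{0\}$, the uncountability of $A_S$ for $S\neq\varnothing$, and clause (b) are immediate from the definition of $A_S$ and never touch $\kappa$. Clause (c) is \autoref{prop:idems} together with its corollary, proved by a ground-model interleaving bijection, again independent of $\kappa$. The heart of the matter is clause (d): given $\dot F$ with $\fix(E)\subseteq\sym(\dot F)$ for some $|E|\leq\kappa$ and some $n\in S\setminus T$, one picks $\alpha,\beta<\kappa^+$ with $\tup{n,\alpha},\tup{n,\beta}$ outside the finite support of the relevant condition and with $\alpha$ in the range of the chosen $f_n$, then swaps them by an automorphism of the $n$th copy; this $\pi$ lies in $\fix(E)$, fixes $\dot A_T$ and the condition while moving the name of the target element, producing the usual contradiction. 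The only new thing to check is that such $\alpha,\beta$ exist, which they do because a support of size $\leq\kappa<\kappa^+$ cannot exhaust the $\kappa^+$ coordinates of a single copy.

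The one genuinely non-formal point, and where I expect to spend the most care, is the surjection witnessing $|\RR|\leq^*|A_\omega|$ in clause (a). Here I would reuse the observation that every real of $\cK_2(\kappa)$ already lies in a Cohen extension by countably many of the coordinate reals, and is therefore the interpretation of a nice $\Add(\omega,\omega)$-name by a generic coded by an element of $A_\omega$. Counting the nice names is precisely where $\GCH$ enters: it guarantees there are only $\fc^V$ of them and that $\fc^V\leq|A_\omega|$, whence \autoref{prop:idems} gives $|A_\omega\times\fc^V|=|A_\omega|$ and the pairing map sending $(x,\alpha)$ to the interpretation of the $\alpha$th name by the generic coded by $x$ surjects onto $\RR$. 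This counting step, rather than any of the symmetry arguments, is the place where $\ZFC+\GCH$ and the arithmetic $\kappa^+\cdot\kappa^+=\kappa^+$ actually do work; everything else is a faithful copy of the $\omega_1$ case.
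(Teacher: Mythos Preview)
Your proposal is correct and is precisely the paper's approach: after proving the propositions for $\omega_1$, the paper merely remarks that ``$\omega_1$ can be replaced by any $\kappa^+$ to preserve $\DC_\kappa$ as well, just as before,'' and then states the theorem without further argument. You have simply supplied the details the paper leaves implicit, and every step you outline---the $\kappa^+$-completeness of $\sF$ giving $\DC_\kappa$, the appeal to \autoref{cor:ACWO}, and the verbatim transcription of the four propositions with supports enlarged to size $\leq\kappa$---matches what the paper intends.
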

We did not referred to the $\gimel$ function in this construction. Krivine's embedding utilizes a type of embedding from $\gimel 2$ to $\gimel_i\power(\kappa)$, where $\kappa$ is collapsed to be countable. Firstly, we did not collapse any cardinals. Moreover, choosing $\kappa=\omega$ to begin with, the forcing to collapse $\kappa$ is just adding a Cohen real. This seems to be somewhat similar to our approach, although several obvious differences still exist (e.g., we add $\omega_1$ Cohen reals). If, however, we will try to replicate the approach of the previous model, then it seems to hint towards defining for every sequence of interpretations of ``a name in $\{0,1\}$'' a sequence of real numbers which behaves like a subset of $\kappa$, which is then coded as a sequence of ``possible subsets of $\kappa$'' and by countability becomes a sequence of reals.

\subsection{The failing Model III: Oddly ordered set}\label{subsect:failure}
Finally, we discuss our failed attempt to construct the third model of Krivine, which is the second model mentioned in \S5.1 of \cite{Krivine:RIII}.
\begin{Kthm}[Krivine, \S5.1 in \cite{Krivine:RIII}]
It is consistent with $\ZF+\DC$ that there is $X\subseteq\RR$ s.t.
\begin{enumerate}
\item $|X|=|X\times X|$,
\item $\aleph_1\nleq^*|X|$,
\item $X$ admits a linear order where every proper initial segment is countable,
\item $|\RR|\leq^*|X\times\omega_1|\leq|\RR|$.
\end{enumerate}
\end{Kthm}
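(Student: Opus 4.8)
The plan is to follow the template of the constructions of $\cK_1$ and $\cK_2$ as closely as possible, starting from $V\models\ZFC+\GCH$ and forcing with a finite-support product of copies of Cohen forcing indexed by a linear order all of whose proper initial segments are countable (an $\omega_1$-like order, such as $\omega_1\times\QQ$), using its order-automorphisms together with the countable-support filter. This immediately secures $\ZF+\DC+\AC_\WO$ by \autoref{cor:ACWO}, and the set $X$ of generic reals should inherit a symmetric linear order $\prec$ externally isomorphic to the index order. Repeating the two arguments given for $\cK_1$ would then deliver property (3), that every proper initial segment of $\tup{X,\prec}$ is countable, and property (2), that $\aleph_1\nleq^*|X|$: in both cases one takes a name $\dot F$ for a putative bad function, fixes a countable initial segment $E$ supporting it, and uses an order-automorphism moving a point above $E$ within a $\QQ$-like interval to contradict the c.c.c.-decided value, exactly as in the proposition and theorem for $\cK_1$.

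For the remaining two properties I would import the device from $\cK_2$: arrange that each element of $X$ codes an entire $\omega$-sequence of Cohen reals rather than a single one. The interleaving bijection $\tup{f,g}\mapsto h$ used in \autoref{prop:idems} is then a symmetric, ground-model-lifted bijection witnessing $|X|=|X\times X|$, giving property (1); and because an $\omega$-sequence of Cohen reals can code an arbitrary real, the map sending a pair consisting of such a sequence and a nice-name index below $\omega_1$ to the corresponding interpretation should be a surjection $X\times\omega_1\twoheadrightarrow\RR$, giving the $\leq^*$ half of property (4), just as in the proof that $|\RR|\leq^*|A_\omega|$ in $\cK_2$. The easy half, $|X\times\omega_1|\leq|\RR|$, follows since $X\subseteq\RR$ and $\omega_1$ injects into $\RR$ through the $\aleph_1$-many ground-model reals, which are already well-ordered in $V$.

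The hard part --- and the point at which this direct approach fails --- is that these two ingredients pull against each other. Without a well-ordering of $X$ at our disposal, idempotence cannot be obtained by transporting an abstract bijection; it must be witnessed concretely by interleaving $\omega$-sequences as above, and likewise the surjection for property (4) forces each element of $X$ to carry the data of infinitely many coordinates. But then continuum-many elements of $X$ cluster over every bounded region of the index order, so the proper initial segments of $\prec$ become uncountable, destroying property (3). Conversely, the rigid ``based-function'' style system that does produce an $\omega_1$-like order is exactly the one that, by the argument giving $|A_2|<^*|A_4|$ in $\cK_1$ (with $A_2\cong X$ and $A_4$ of the cardinality of $X\times X$), provably yields $|X|<^*|X\times X|$, the negation of property (1), and by the same homogeneity blocks any symmetric surjection $X\times\omega_1\twoheadrightarrow\RR$. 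Reconciling a symmetric $\omega_1$-like order with a symmetric witness to $|X|=|X\times X|$, all while keeping $\aleph_1\nleq^*|X|$, is therefore the crux, and it is far from clear that it can be achieved by a single naive Cohen-based system; this is why the strategy that succeeded for $\cK_1$ and $\cK_2$ breaks down here. As the introduction records, the tension is ultimately resolved only by Krivine's later result showing that the realizability model is after all a symmetric extension of a model of $\ZFC$.
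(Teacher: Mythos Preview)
Your reading is essentially right: the paper does not give a direct symmetric-extension proof of this theorem, the obstacle is precisely property~(1), and the only resolution offered is the indirect one via Krivine's announcement together with Usuba's theorem. Your diagnosis of the tension between idempotence and an $\omega_1$-like order is on target and matches the paper's discussion of the two failed repair strategies (preserving a fixed bijection with the square, or passing to $X^{<\omega}$ or $X^{\omega,V}$).

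One correction, however: you bundle properties~(1) and~(4) together and claim the $\cK_1$-style system ``blocks any symmetric surjection $X\times\omega_1\twoheadrightarrow\RR$''. The paper shows otherwise. In $\cK_1$ itself, $\AC_X$ holds by \autoref{thm:choice-lemma}, so one can uniformly choose an enumeration of each proper initial segment $X\restriction x$; then $\tup{x,\alpha}$ is sent to the interpretation of the $\alpha$th nice $\Add(\omega,\QQ)$-name using $X\restriction x$ as a generic. This gives $|\RR|\leq^*|X\times\omega_1|$ without any $\omega$-sequence coding. Thus $\cK_1$ already satisfies (2), (3), and (4), and \emph{only} (1) fails there --- which is why the paper's final theorem in that subsection is stated with (1) omitted. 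Your route to~(4) via the $\cK_2$ device is unnecessary, and dropping it sharpens the picture: the whole difficulty really is concentrated in~(1).
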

It seems quite clear that by taking $X=\omega_1\times\QQ$, with the lexicographic ordering, we obtain a linear ordering where every proper initial segment is countable, and in fact isomorphic to any other proper initial segment, except the empty set. Indeed, this is the model given in the multiplicative sequence part of this very paper. The model obtained there satisfies $\ZF+\DC+\AC_\WO$, $X$ has a linear ordering, $\prec$, where every proper initial segment is countable, and $\aleph_1\nleq^*|X|$.

As a consequence of \autoref{thm:choice-lemma} $\AC_X$ holds.\footnote{This, as Krivine informed us, holds in the realizability model as well. As is the case for $\AC_\WO$.} Therefore we can choose uniformly an enumeration for each $X\restriction x=\{y\in X\mid y\prec x\}$, and map $\tup{x,\alpha}$ to the interpretation of the $\alpha$th canonical name of a Cohen real in $\Add(\omega,\QQ)$, to its interpretation using $X\restriction x$ as a generic filter. By standard arguments it follows that this map is surjective.

This seems like we are done, but we are also required $|X|=|X\times X|$, which is blatantly false. In fact, this was part of the crux of the construction of the multiplicative sequence, since $A_2$ was a copy of $X$ itself. There are two immediate approaches to correct for this problem.

The first option is to preserve more information. Namely, fix a bijection of $\omega_1\times\QQ$ with its square, and ensure that the automorphisms preserve that bijection as well. The second option is to replace $X$ with some set defined from it, e.g.\ $X^{<\omega}$ or $X^{\omega,V}$. 

The first approach seems to require a refined model theoretic analysis which depends on the bijection $F$, since the main point of the construction is that we need to ensure that any point can be ``moved up'' arbitrarily high using a permutation. This is easy with arbitrary order automorphisms, but adding a bijection adds a lot more constraints which may also depend on the bijection.

The second approach fails because there is either no obvious bijection with the square\footnote{Recall that the standard argument for $|X^{<\omega}\times X^{<\omega}|=|X^{<\omega}|$ involves splitting $X$ into two disjoint parts, each equipotent with $X$. This seems to require more choice than we can afford.} or we somehow code a surjection onto $\omega_1$.

To make matters worse, trying to understand the $\gimel$ operator as a collection of possible names for a ground model set is not going to work here either, since $\gimel 2$ is a Boolean algebra with four elements in Krivine's model. There is a silver lining, though, to the finiteness of $\gimel 2$: after the release of the first draft of this paper Krivine has announced that over a model where $\gimel 2$ is finite we can in fact force the axiom of choice. This is interesting for us: recently Toshimichi Usuba proved (see Corollary~12 in \cite{Usuba:2019}) that if $M$ is a model of $\ZF$ and we can force the axiom of choice over $M$ (with a set forcing, of course), then $M$ is a symmetric extension of a definable inner model of $\ZFC$. Of course, there is no guarantee in the realizability case that this inner model is indeed the elementary extension of the ground model, nor we can pinpoint the symmetric extensions in full. Nevertheless, it shows that Krivine's consistency result can be obtained via classical methods.

Our result, while not quite that of Krivine, as we omit the first property of $X$, can be phrased as follows.
\begin{theorem}
Assume that $V\models\ZFC+\CH$. Then there is a cofinality preserving symmetric extension satisfying $\ZF+\DC+\AC_\WO$ in which there is a set $X\subseteq\RR$ such that:
\begin{enumerate}
\item $\aleph_1\nleq^*|X|$,
\item $X$ admits a linear ordering where every proper initial segment is countable,
\item $|\RR|\leq^*|X\times\omega_1|\leq|\RR|$, and
\item $\AC_X$ holds.
\end{enumerate}
\end{theorem}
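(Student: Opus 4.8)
The plan is to reuse the model $\cK_1$ from the multiplicative-sequence construction: take $\PP=\Add(\omega,\omega_1)$, let $\sG$ be the lexicographic order automorphisms of $\omega_1\times\QQ$, let $\sF$ be generated by the groups $\fix(E)$ for countable $E\subseteq\omega_1\times\QQ$, and set $X=\{x_{\alpha,q}\mid\tup{\alpha,q}\in\omega_1\times\QQ\}$ with its inherited lexicographic order $\prec$. Since $\PP$ is c.c.c.\ and $\sF$ is $\sigma$-complete, the extension is cofinality preserving and satisfies $\ZF+\DC+\AC_\WO$ by \autoref{cor:ACWO}. Properties (1) and (2) are exactly what the earlier Proposition proves for $\cK_1$, and (4), namely $\AC_X$, is the instance of \autoref{thm:choice-lemma} obtained from the fact that $\dot X$ is an injective, densely measurable name in a mixable system with an absolute representative. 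Thus the only new content is property (3), and the rest of the proof is devoted to it.

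For the easy half, $|X\times\omega_1|\leq|\RR|$, I would use $\CH$ in $V$: there is a bijection $g\colon\omega_1\to\RR^V$ lying in $V$, and since $V\subseteq\cK_1$ and $\omega_1$ is preserved, $g\in\cK_1$ witnesses $\omega_1\hookrightarrow\RR$. Combining $g$ with the inclusion $X\subseteq\RR$ and a definable pairing $\RR\times\RR\hookrightarrow\RR$ yields an injection of $X\times\omega_1$ into $\RR$ that lives in $\cK_1$.

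The substantial half is $|\RR|\leq^*|X\times\omega_1|$. By property (2) every proper initial segment $X\restriction x=\{y\in X\mid y\prec x\}$ is countably infinite, so by $\AC_X$ I can fix, uniformly over $x\in X$, a bijection $e_x\colon\omega\to X\restriction x$; this choice function lies in $\cK_1$. Reading the Cohen reals of $X\restriction x$ through $e_x$ turns $X\restriction x$ into a $V$-generic filter $g_x$ for $\Add(\omega,\omega)\cong\Add(\omega,\QQ)$. Fixing in $V$, via $\CH$, an enumeration $\tup{\tau_\beta\mid\beta<\omega_1}$ of all nice $\Add(\omega,\omega)$-names for reals, I define $\Phi(x,\beta)=\tau_\beta^{g_x}$, a function of $\cK_1$. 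To see $\Phi$ is onto the reals of $\cK_1$, take any real $r\in\cK_1$; by the c.c.c.\ it has a name supported on a countable $E\subseteq\omega_1\times\QQ$, and choosing $x=x_{\gamma,0}$ with $\gamma$ strictly above the projection of $E$ to $\omega_1$ places all of $E$ below $x$ in the lexicographic order. Then $r\in V[G\restriction E]\subseteq V[g_x]$, so $r=\tau_\beta^{g_x}=\Phi(x,\beta)$ for some $\beta<\omega_1$.

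The hard part will be the bookkeeping in this last step: one must verify that $e_x$ really decodes $X\restriction x$ into a mutually generic countable family of Cohen reals, so that $g_x$ is genuinely $V$-generic and $V[G\restriction E]\subseteq V[g_x]$, and that $\Phi$, which is built from the choice function delivered by $\AC_X$, is a member of $\cK_1$ rather than merely of $V[G]$. Both points rest on the mutual genericity of the coordinates of $X$ below $x$ together with the enumeration being selected inside the model; the surjectivity computation is performed in $V[G]$, but since $\Phi$ and all the witnesses $\tup{x,\beta}$ lie in $\cK_1$, the statement transfers down to $\cK_1$.
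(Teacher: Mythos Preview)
Your proposal is correct and follows essentially the same route as the paper. The paper's argument, given in the prose preceding the theorem statement in \S\ref{subsect:failure}, is exactly to reuse $\cK_1$, invoke the earlier Proposition for (1) and (2), obtain $\AC_X$ from \autoref{thm:choice-lemma} for (4), and then use $\AC_X$ to choose enumerations of the initial segments $X\restriction x$ and interpret the $\alpha$th canonical $\Add(\omega,\QQ)$-name via the resulting generic; the paper dismisses the surjectivity as following ``by standard arguments'', and you correctly flag the bookkeeping that sits behind that phrase. Your treatment is in fact slightly more complete than the paper's, since you also spell out the easy inequality $|X\times\omega_1|\leq|\RR|$ via $\CH$ and a pairing function, which the paper leaves implicit. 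One small slip: you write $\PP=\Add(\omega,\omega_1)$, but, consistently with your choice of $\sG$ and $X$, you mean $\Add(\omega,\omega_1\times\QQ)$, as in the construction of $\cK_1$.
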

\section{Concluding remarks}
It is always exciting to see new techniques for producing results in set theory, even if the results are old and known. 

For consistency results related to the axiom of choice we have symmetric extensions, relative constructibility, and the lesser-known method of forcing over models with atoms (see \cite{BlassScedrov:1989} for details). These are all tightly related to one another. For example, the method of symmetric extensions can be presented as we did here, or by cleverly choosing a set $A$ and considering $\HOD(V\cup A)^{V[G]}$, as was shown by Serge Grigorieff in \cite{Grigorieff:1975}.

We hope that this paper will motivate others to investigate the connections between realizability models and symmetric extensions. We suggest that as a complement to this paper, some of the famous results should be reproved using realizability. Since we need to preserve $\ZF+\DC$, Solovay's model-style constructions (e.g., preservation of large cardinal properties at $\omega_1$) seem like a good start.

One last point of interest to those coming from realizability would be to look at pre-Shoenfield independence results related to the axiom of choice. Cohen's original definition of forcing has a more intuitionistic flavor, and these were not always presented as a group acting on a forcing, but rather identified a family of names and used them to define a model. 

The following list is a list of questions we believe are important for understanding the connection between realizability models and symmetric extensions.
\begin{enumerate}
\item What kind of new reals are added? Are they all Cohen, for example, over the elementary extension of the ground model? Are they even generic over this copy to begin with?
\item Can we understand the $\gimel$ function in terms of names being interpreted by some set of canonical reals? Is it somehow related to Boolean-valued reduced powers? In a discussion with Krivine recently he suggested thinking about $\gimel 2$ as a measurement of how far away the realizability model is from a forcing extension (which is a trivial realizability construction). Grigorieff in \cite{Grigorieff:1975} showed that if $V\subseteq M\subseteq V[G]$ and $M$ is a symmetric extension, then there is a homogeneous Boolean algebra in $M$ such that $G$ is the generic object for it. In some sense, this Boolean algebra can be also seen as some measurement of how far we might be from $V[G]$, and the two objects might be somehow related. Unfortunately, this approach will not help to resolve the case of a finite $\gimel 2$ as in the third model.
\item In light of Krivine's newly announced results that the axiom of choice can be forced over models where $\gimel 2$ is finite, what can we say about the $\ZFC$ ground model inside those symmetric extensions? Is it the elementary extension of the ground model? In that case, can we pinpoint the symmetric system?
\item If we are only interested in the structure of the real numbers, can we reduce \textit{those results} to symmetric extensions as we did above?
\end{enumerate}  
\bibliographystyle{amsplain}
\providecommand{\bysame}{\leavevmode\hbox to3em{\hrulefill}\thinspace}
\providecommand{\MR}{\relax\ifhmode\unskip\space\fi MR }
\providecommand{\MRhref}[2]{%
  \href{http://www.ams.org/mathscinet-getitem?mr=#1}{#2}
}
\providecommand{\href}[2]{#2}

\end{document}